\theoremstyle{plain}
\newtheorem{theorem}{Theorem}[section]
\newtheorem{proposition}[theorem]{Proposition}
\newtheorem{lemma}[theorem]{Lemma}
\theoremstyle{definition}
\newtheorem{definition}[theorem]{Definition}
\theoremstyle{remark}
\numberwithin{equation}{section}
\newcommand{\CC}{\mathbb{C}}
\newcommand{\Riem}{\mathrm{Rm}}
\newcommand{\Ric}{\mathrm{Ric}}
\newcommand{\Hess}{\mathrm{Hess}}
\renewcommand{\div}{\mathrm{div}}
\begin{document}
\title{Variation of complex structures and the stability of K\"ahler-Ricci Solitons}
\begin{abstract}
We investigate the linear stability of K\"ahler-Ricci solitons for perturbations  induced by varying the complex structure within a fixed K\"ahler class. We calculate stability for the known examples of K\"ahler-Ricci solitons.
\end{abstract}
\author{Stuart J. Hall}

\address{Department of Applied Computing, University of Buckingham, Hunter St., Buckingham, MK18 1G, U.K.} 
\email{stuart.hall@buckingham.ac.uk}

\author{Thomas Murphy}

\address{D\'epartment de Math\'ematique,
Universit\'e Libre de Bruxelles,
CP 218,
Boulevard du Triomphe,
B-1050 Bruxelles,
Belgique.}

\email{tmurphy@ulb.ac.be}

\maketitle

\section{Introduction}

We consider a stability problem for shrinking K\"ahler-Ricci solitons. These are critical points of the $\nu$-functional, defined by Perelman  on the space of Riemannian metrics on a closed manifold $M$. The main result  is a formula for the second variation of this functional when restricted to  perturbations obtained by varying the complex structure within a fixed K\"ahler class. Such perturbations were first studied by Tian and Zhu \cite{TZ} for K\"ahler-Einstein manifolds, and our paper attempts to extend their results to K\"ahler-Ricci solitons.  Definitions and notation from the main theorem are explained below. 

\begin{theorem}[Main Theorem] \label{Maint}
Let $(M,g,f)$ be a normalised K\"ahler-Ricci soliton and let $h$ be a $f$-essential variation. The second variation of the $\nu$-functional at $g$, $\langle Nh,h\rangle_{f}$ is given as: 
$$ \langle N h,h \rangle_{f} = 2\int_{M} f \|h\|^{2}e^{-f}dV_{g}.$$ 
\end{theorem}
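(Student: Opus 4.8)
The plan is to compute the second variation of Perelman's $\nu$-functional directly on the restricted class of perturbations coming from varying the complex structure within a fixed Kähler class, exploiting the fact that such perturbations are "gauge-special": they are TT (transverse-traceless) with respect to the weighted measure, so the generically complicated cross-terms in the second variation formula simplify. The stability operator $N$ for $\nu$ acting on TT tensors is, up to normalization, the weighted Lichnerowicz Laplacian together with a curvature term; the key reduction is to show that on $f$-essential variations this operator has a clean closed form whose inner product against $h$ collapses to $2\int_M f\|h\|^2 e^{-f}\,dV_g$.

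I would proceed as follows. First, recall that an $f$-essential variation $h$ arises as $h = \mathcal{L}_X g$ in a suitable sense — more precisely, if $J_t$ is a family of complex structures in a fixed Kähler class with $\dot J = I$, then the induced metric variation is $h(\cdot,\cdot) = -\omega(I\cdot,\cdot)$, and the relevant object is the associated $(1,0)$ potential vector field. The second step is to set up the second variation formula for $\nu$ at a shrinking soliton: with the soliton normalized so that $\Ric + \Hess f = \tfrac12 g$ (this is what "normalised" should mean in the paper's convention, up to a factor), one has $\langle Nh, h\rangle_f = \int_M \langle (\tfrac12\Delta_f h + \Riem(h) - \tfrac12 h), h\rangle e^{-f}\,dV_g$ where $\Delta_f = \Delta - \nabla_{\nabla f}$ is the drift Laplacian and $\Riem(h)_{ij} = R_{ikjl}h^{kl}$, valid because $h$ is TT in the weighted sense and lies in the kernel of the relevant divergence-type constraints. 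The third step is the Kähler-specific computation: for a variation tangent to the space of complex structures in a fixed class, $h$ corresponds to a holomorphic-type tensor (a section related to $H^1(M, T^{1,0}M)$ modulo automorphisms), and one uses the Kähler identities together with the soliton equation to rewrite $\tfrac12\Delta_f h + \Riem(h)$. The expectation is that the holomorphicity-type condition forces $\tfrac12\Delta_f h + \Riem(h) = ch + (\text{something involving }f)$, and matching against the soliton equation $\Ric+\Hess f = \tfrac12 g$ produces exactly the factor $2f$.

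Concretely, the mechanism I anticipate is this: write $h$ in terms of a potential, integrate by parts against $e^{-f}\,dV_g$ so that the drift Laplacian acts as a self-adjoint operator, and use the commutation of $\Delta_f$ with the Kähler structure to transfer derivatives onto $f$. The soliton identity contributes a term $\Hess f$ acting on the potential, and the Bochner-type computation on the Kähler manifold should convert $\langle \Hess f \cdot (\text{potential}), h\rangle$ into $f\|h\|^2$ after one more integration by parts (using $\Delta_f f = \text{const} - f$ type relations that hold for normalized solitons). The main obstacle will be this last bookkeeping: correctly handling the potential-to-tensor dictionary for variations of complex structure (the Tian–Zhu framework), keeping track of which integrations by parts are legitimate given the TT constraints, and verifying that all the lower-order curvature terms that are not manifestly $f\|h\|^2$ actually cancel. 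In particular, I expect the subtle point to be showing that the term one would naively get from $\Riem(h)$ against $h$ combines with a term from $\Hess f$ rather than surviving on its own — this cancellation is presumably where the hypothesis that $h$ is a *complex structure* variation (as opposed to an arbitrary TT tensor) is essential, since for general TT tensors the second variation is not of this form.
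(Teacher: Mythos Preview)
Your proposal has the right high-level shape but misses the actual mechanism, and in one place goes in precisely the wrong direction.

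First, a concrete error: you suggest ``write $h$ in terms of a potential'' and anticipate a term like $\langle \Hess f\cdot(\text{potential}),h\rangle$. But $f$-essential variations are by definition those orthogonal to all Lie-derivative (i.e.\ $\bar\partial$-exact) variations; they are the harmonic representatives of $H^{1}(M,T^{1,0}M)$ and have \emph{no} potential. So the central step of your plan is not available. Relatedly, your formula for $N$ on TT tensors carries an extra $-\tfrac{1}{2}h$ term that is absent in the paper's operator; with that term in place you would not land on $2\int_M f\|h\|^2 e^{-f}\,dV_g$.

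The paper's argument is cleaner and quite different from the cancellation you anticipate. After noting $\div_f h=0$ and $C(h,g)=0$, one has $\langle Nh,h\rangle_f=\langle \tfrac{1}{2}\Delta_f h+\Riem(h,\cdot),h\rangle_f$. Split $\Delta_f=\Delta-\nabla_{\nabla f}$. The heart of the matter is showing the \emph{undrifted} part $\int_M(\tfrac{1}{2}\Delta h+\Riem(h,\cdot),h)\,e^{-f}dV_g$ vanishes. This is done via a Weitzenb\"ock identity: using the Dai--Wang--Wei spin$^c$ map $\Phi$ one identifies $(\tfrac{1}{2}\Delta h+\Riem(h,\cdot),h)$ pointwise with a multiple of $(\Delta_{\bar\partial}h,h)$ (the Ricci-type terms cancel because $h$ is skew-Hermitian). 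Since $f$-essential means $\bar\partial h=0$ and $\bar\partial_f^*h=0$, the variation $h$ is $\Delta_{\bar\partial,f}$-harmonic and hence orthogonal to the image of $\Delta_{\bar\partial}$ in the weighted inner product, killing that integral. The entire answer then comes from the residual drift term $-\tfrac{1}{2}\langle\nabla_{\nabla f}h,h\rangle_f$, which an elementary integration by parts rewrites as a multiple of $\int_M(\Delta_f f)\|h\|^2 e^{-f}dV_g$; the normalized-soliton identity $\Delta_f f=-2f$ finishes the computation. So the $f$ in the final formula does not arise from $\Hess f$ interacting with curvature, as you guessed, but purely from the drift in $\Delta_f$ after the curvature part has already been shown to vanish.
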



The main utility of this result is that if one had explicit knowledge of the metric and the function $f$ then it is possible to calculate the quantity $\langle Nh,h\rangle_{f}$ quite easily. In section 4, we do this for all the known examples of K\"ahler-Ricci solitons. Notice also that for K\"ahler-Einstein metrics $f=0$  and so $N(h)=0$, recovering a result of Tian and Zhu. \\
\\
The structure of this paper is as follows. In section 1, we begin with background on Ricci solitons and the stability problem. In section 2, the space $\mathcal{W}(g)$ and the space of $f$-essential variations studied in the above theorem are studied. We obtain  several useful characterizations of elements of these spaces. In section 3 we give a proof of the main theorem. In section 4, the stability of the known examples of Ricci solitons is investigated.\\
\\
After a preliminary version of this work was posted on the arxiv, Yuanqi Wang kindly made us aware that he had independently obtained our main Theorem \ref{Maint} as part of his Ph.D. thesis \cite{YWang} completed in 2011. The proof in \cite{YWang} is similar to ours but proceeds by direct calculation rather than using the Dai-Wang-Wei results.  His thesis also contains interesting results about convergence of the K\"ahler-Ricci flow to a K\"ahler-Einstein metric when the complex structure is allowed to vary.\\
\\
\emph{Acknowledgements:}
It is a pleasure to thank Joel Fine for useful conversations. TM is supported by an A.R.C. grant.
We acknowledge the support of a Dennison research grant from the University of Buckingham which funded research visit by TM.

\section{Ricci solitons and stability}
\subsection{Background on solitons}
Throughout this paper, $(M,g)$ is a smooth closed Riemannian manifold. 

\begin{definition}[Ricci soliton]
Let  $X\in\Gamma(TM)$ be a smooth vector field. The triple $(M,g,X)$ is called a Ricci soliton if it satisfies the equation
\begin{equation}\label{rs}
\Ric(g)+L_{X}g=cg
\end{equation}
for a constant $c \in \mathbb{R}$.
If $c<0, c=0, c>0$ then the soliton is refered to as expanding, steady and shrinking respectively. When $c\neq 0$, set $c= \frac{1}{2\tau}$. If $X=\nabla f$ for a smooth function $f$ then the soliton is called a gradient Ricci soliton and \ref{rs} becomes
\begin{equation}\label{grs}
\Ric(g)+\Hess (f)=\frac{1}{2\tau}g.
\end{equation} 
\end{definition}

When the vector field $X$ is Killing an Einstein metric is recovered; Einstein metrics are therefore referred to as trivial Ricci solitons. We can set $c=1$ to factor out homothety and as one may change the soliton potential $f$ by a constant let us  also require that
$$ \int_{M}fe^{-f}dV_{g}=0.$$
A soliton with these choices will be referred to as a \emph{normalised gradient Ricci soliton}.

As well as being interesting as generalisations of Einstein metrics, Ricci solitons also occur as the fixed points of the Ricci flow
\begin{equation}\label{RF} 
\frac{\partial g}{\partial t}=-2\Ric(g) 
\end{equation}
up to diffeomorphism. In this paper we will be considering non-trivial Ricci solitons on compact manifolds. Foundational results due to Perelman \cite{Per}  and Hamilton \cite{Ham} imply that expanding and steady Ricci solitons on compact manifolds must be trivial.  Hence our focus is on shrinking Ricci solitons. Perelman also showed that such solitons are necessarily gradient Ricci solitons. We will henceforth refer to these metrics as non-trivial shrinkers.\\
\\
Due to the work of many people \cite{Koi2}, \cite{Cao}, \cite{WangZhu}, \cite{DanWan} \cite{ps},  there are now many (infinitely many) examples of non-trivial shrinkers. One striking feature all known non-product examples share is that they are K\"ahler. This immediately implies that the vector field $\nabla f$ is holomorphic and that the underlying manifold $M$ is in fact a smooth Fano variety.\\  
\\ 
Perelman \cite{Per} showed that gradient Ricci solitons are the critical points of a functional which is usually denoted by $\nu(g)$. Let $f\in C^{\infty}(M)$ and $\tau \in \mathbb{R}$.  We say that $(f,\tau)$ is compatible if 
$$\int_{M}e^{-f}(4\pi\tau)^{-n/2}=1.$$
\begin{definition}
The $\nu$-functional  is given by
$$ \nu(g) = \inf_{\textrm{compatible }(f,\tau)}\int_{M}[(R+|\nabla f|^{2})\tau+f-n]e^{-f}(4\pi \tau)^{-n/2}dV_{g},$$
where $R$ is the scalar curvature of $g$.
\end{definition}
As well as giving a variational characterization of Ricci solitons, Perelman showed that the functional is monotonically increasing under the Ricci flow. Hence if one could perturb a soliton in a direction that increases $\nu$ and then continue the flow, one would not flow back to the soliton and the soliton would be regarded as unstable.
\subsection{Linear stability}
In order to determine the behaviour of the flow around a soliton one can investigate the second variation of $\nu(g)$ for an admissable perturbation.
\begin{definition}
Let $h\in s^2(T^*M)$. Then $g+th$, $t\in\mathbb{R}^+$ is said to be an admissable perturbation. We have $\frac{\partial g}{\partial t}\bigg|_{t=0}=h$. 
\end{definition}

If the second variation is strictly negative then  the fixed point is stable and attracting. If the second variation has positive directions then one may perturb the soliton and then flow away. Natasha Sesum has obtained fundamental results on this topic \cite{Ses}. 

\begin{proposition}[Cao-Hamilton-Ilmanen, Cao-Zhu, \cite{CHI}, \cite{CMZ}]
Let $h\in s^{2}(TM^{\ast})$ be an admissable variation of a Ricci soliton $g$.  The second variation of $\nu$ is given by
$$D^{2}_{g}\nu(h,h) = \int_{M}\langle Nh,h\rangle e^{-f}dV_{g},$$
 where
\begin{equation}\label{Neqn}
Nh = \frac{1}{2}\Delta_{f} h+\Riem (h,\cdot)+\div^{\ast}\div_{f}h+\frac{1}{2}\Hess(v_{h})+C(h,g)\Ric.
\end{equation}
Here $\Delta_{f}(\cdot) = \Delta(\cdot) - \nabla_{\nabla f}(\cdot)$, $\div_{f}(\cdot) = \div(\cdot) - \iota_{\nabla f}$, $v_{h}$ is the solution of the equation
$$ \Delta_{f}v_{h}+\frac{v_{h}}{2\tau} = \div_{f}\div_{f}(h), $$ 
and 
$$C(h,g) = \frac{\int_{M}\langle \Ric, h \rangle e^{-f}dV_{g}}{\int_{M}Re^{-f}dV_{g}}.$$
\end{proposition}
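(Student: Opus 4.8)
The plan is to realise $\nu$ through Perelman's $\mathcal{W}$-functional and to compute its second variation as the linearisation of its gradient at a critical point. Regard
$$\mathcal{W}(g,f,\tau)=\int_{M}\big[(R+|\nabla f|^{2})\tau+f-n\big]e^{-f}(4\pi\tau)^{-n/2}\,dV_{g}$$
as a function of the three arguments $g$, $f$, $\tau$, constrained by the compatibility relation, and let $(f_{g},\tau_{g})$ denote the constrained minimiser in $(f,\tau)$, so that $\nu(g)=\mathcal{W}(g,f_{g},\tau_{g})$. A direct computation of the $g$-variation, together with the first-order envelope theorem (the $f$- and $\tau$-variations vanish at the minimiser), shows that the gradient of $\nu$ with respect to the weighted inner product $\int_{M}\langle\cdot,\cdot\rangle e^{-f}dV_{g}$ is a fixed nonzero multiple of
$$\mathcal{E}(g)=\Ric(g)+\Hess(f_{g})-\frac{1}{2\tau_{g}}g,$$
which vanishes exactly at a normalised gradient shrinker. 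Since $g$ is a critical point, the second variation is the linearisation of this gradient paired with $h$, so $D^{2}_{g}\nu(h,h)=\int_{M}\langle Nh,h\rangle e^{-f}dV_{g}$ with $Nh$ the corresponding multiple of $D\mathcal{E}[h]$, and the problem reduces to computing $D\mathcal{E}[h]$.

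Next I would split $D\mathcal{E}[h]$ into its three natural sources: the explicit $g$-dependence of the Ricci tensor and of the operator $\Hess_{g}$ evaluated on the fixed potential; the dependence through the moving potential $f_{g}$, encoded by $\dot f=D_{g}f_{g}[h]$; and the dependence through the moving scale $\tau_{g}$, encoded by the scalar $\dot\tau=D_{g}\tau_{g}[h]$. The induced variations $\dot f$ and $\dot\tau$ are not free: differentiating the constrained Euler--Lagrange equations for the minimiser in the direction $h$ gives the equations that determine them. Linearising Perelman's equation (the $f$-Euler--Lagrange equation) yields precisely
$$\Delta_{f}v_{h}+\frac{v_{h}}{2\tau}=\div_{f}\div_{f}(h),$$
so that $\dot f$ is identified with the function $v_{h}$ of the statement, while linearising the scalar normalisation constraint produces $\dot\tau$.

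I would then evaluate the three contributions. The linearisation of $\Ric$ is the standard formula involving the Lichnerowicz Laplacian; reorganised in the weighted ($e^{-f}dV_{g}$) setting and combined with the linearised Hessian of the fixed potential, it assembles into $\tfrac{1}{2}\Delta_{f}h+\Riem(h,\cdot)+\div^{\ast}\div_{f}h$, the last term being exactly what makes $N$ covariant under the diffeomorphism gauge. The contribution of $\dot f=v_{h}$ through the term $\Hess(f_{g})$ is $\tfrac{1}{2}\Hess(v_{h})$, and the scalar contribution of $\dot\tau$, after invoking the soliton identity $\Ric=\tfrac{1}{2\tau}g-\Hess(f)$, produces the Ricci multiple $C(h,g)\Ric$. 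Collecting these pieces gives $Nh$ as stated and hence the claimed formula for $D^{2}_{g}\nu(h,h)$.

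The main obstacle is the coupling bookkeeping between $h$ and the induced data $\dot f,\dot\tau$. First one must justify the envelope reduction, namely that the constrained minimiser $(f_{g},\tau_{g})$ depends smoothly on $g$; this rests on the nondegeneracy of the Hessian of $\mathcal{W}$ in the $(f,\tau)$-directions and an implicit function theorem argument on the constraint. Second, one must derive the linearised equation for $\dot f$ carefully enough to recognise its right-hand side as $\div_{f}\div_{f}h$ and to track every constant through the second derivatives of the curvature and of the weighted volume, so that the coefficients $\tfrac12$ in front of $\Delta_{f}h$ and $\Hess(v_{h})$, and the precise ratio $C(h,g)$, emerge correctly. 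Finally, because $\nu$ is diffeomorphism invariant, $N$ must annihilate Lie-derivative directions $L_{X}g$; verifying that the assembled $\div^{\ast}\div_{f}h$ term carries the sign and weight that make this manifest is the most delicate point of the calculation.
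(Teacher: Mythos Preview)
The paper does not prove this proposition; it is quoted as a known result from \cite{CHI} and \cite{CMZ} and used as a black box throughout. So there is no ``paper's own proof'' to compare against, and your outline is essentially the approach of Cao--Zhu \cite{CMZ}: write $\nu(g)=\mathcal{W}(g,f_{g},\tau_{g})$, use the envelope principle to identify the gradient with (a multiple of) the soliton tensor, and obtain $N$ as its linearisation, splitting into the explicit $g$-variation of $\Ric+\Hess f-\tfrac{1}{2\tau}g$ and the induced variations of $f$ and $\tau$ determined by the linearised constraint equations.

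One point in your sketch is a little loose. You say the $\dot\tau$ contribution, via the soliton identity $\Ric=\tfrac{1}{2\tau}g-\Hess f$, yields $C(h,g)\Ric$; but differentiating $-\tfrac{1}{2\tau}g$ in $\tau$ gives a multiple of $g$, not of $\Ric$, and the identity alone does not turn one into the other. In the actual derivation the $C(h,g)\Ric$ term comes from the interaction between the $\tau$-normalisation constraint and the overall factor of $\tau$ in the $\mathcal{W}$-integrand (equivalently, from differentiating the coefficient in front of the soliton tensor in the gradient and using that the soliton tensor is being paired against $\Ric$-weighted quantities after integration by parts). Likewise, the $-\tfrac{1}{2\tau}h$ term that appears when you vary $-\tfrac{1}{2\tau}g$ in $g$ must be absorbed into the curvature/Lichnerowicz block using the soliton equation before the expression collapses to $\tfrac12\Delta_{f}h+\Riem(h,\cdot)+\div^{\ast}\div_{f}h$. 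These are exactly the ``bookkeeping'' issues you flag, but as written your attribution of which term produces which piece of $N$ is not quite right.
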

This operator allows us to define the concept of linear stability.
\begin{definition}
Let $(M,g,f)$ be a Ricci soliton. The soliton is \emph{linearly stable} if the operator $N$ is non-positive definite and \emph{unstable} otherwise.
\end{definition}
We now focus upon K\"ahler-Ricci solitons. The first observation regarding stability is the following:
\begin{theorem}[\cite{CHI,HM, TZ}]\label{KRSinst}
Let $(M,g,f)$ be a K\"ahler-Ricci soliton. If \newline $dim H^{(1,1)}(M)>1$, then $(M,g,f)$ is unstable. 
\end{theorem}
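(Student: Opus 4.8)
The plan is to produce a single explicit variation $h$ with $\langle Nh,h\rangle_{f}>0$; by the definition of linear stability this exhibits the soliton as unstable. By Hodge theory each class in $H^{(1,1)}(M)$ has a unique harmonic representative, and one such class is that of the K\"ahler form $\omega$, so the hypothesis $\dim H^{(1,1)}(M)>1$ supplies a nonzero harmonic $(1,1)$-form $\rho$ not proportional to $\omega$. On a compact K\"ahler manifold the Lefschetz operator $\Lambda$ commutes with the Hodge Laplacian, so $\Lambda\rho$ is a harmonic function, hence constant; replacing $\rho$ by $\rho-\tfrac1n(\Lambda\rho)\,\omega$ we may assume $\rho$ is pointwise primitive, $\Lambda\rho=0$, while keeping $\rho\neq 0$. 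Set $h(X,Y)=\rho(JX,Y)$. Since $\rho$ has type $(1,1)$, $h$ is symmetric and $J$-invariant; moreover $\tr_{g}h=\Lambda\rho=0$, and since $J$ is parallel the co-closedness $\delta\rho=0$ yields $\div h=0$. Thus $h$ is a nonzero, trace-free, divergence-free Hermitian variation.

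The next step is to evaluate $Nh$ from \eqref{Neqn}, using the K\"ahler-Ricci soliton structure. The field $\D f$ is holomorphic and $J\D f$ is holomorphic Killing, so its (connected) isometric flow fixes every harmonic form: $\mathcal{L}_{J\D f}\rho=0$, whence $\iota_{J\D f}\rho$ is closed; since $M$ is Fano we have $b_{1}(M)=0$, so $\iota_{J\D f}\rho=d\phi$ for some $\phi\in C^{\infty}(M)$. A short computation then gives the weighted divergence $\div_{f}h=\div h-\iota_{\D f}h=-\iota_{J\D f}\rho=-d\phi$, exact but generally nonzero. The heart of the proof is a Bochner--Weitzenb\"ock identity on the soliton: combining the full harmonicity of $\rho$, the parallelism of $J$, the soliton equation $\Ric+\Hess f=\tfrac{1}{2\tau}g$, and the commutation of $\Lambda$ with $\Delta_{f}$ on a K\"ahler manifold, one expects
$$\tfrac{1}{2}\Delta_{f}h+\Riem(h,\cdot)=\tfrac{1}{2\tau}h+(\text{first-order terms built from }d\phi).$$
One then checks that the remaining pieces of $N$ --- the term $\div^{\ast}\div_{f}h$ (a Hessian of $\phi$), the Hessian term $\tfrac{1}{2}\Hess(v_{h})$ (whose source $\div_{f}\div_{f}h$ is, up to sign, $\Delta_{f}\phi$, which determines $v_{h}$ from $\phi$), and $C(h,g)\Ric$ --- cancel these correction terms, or at worst contribute a nonnegative amount, once everything is paired with $h$ and integrated against $e^{-f}\,dV_{g}$ using the weighted integration by parts $\int\langle\div^{\ast}\alpha,h\rangle e^{-f}\,dV_{g}=\int\langle\alpha,\div_{f}h\rangle e^{-f}\,dV_{g}$.

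The outcome should be
$$\langle Nh,h\rangle_{f}=\frac{1}{2\tau}\int_{M}\|h\|^{2}e^{-f}\,dV_{g}>0,$$
since $\tau>0$ for a shrinker and $h\not\equiv 0$; this is a strictly positive direction for $N$, so $(M,g,f)$ is unstable. When $f$ is constant one has $\D f=0$, so $\div_{f}h=\div h=0$, $v_{h}=0$ and $C(h,g)=0$, and the statement reduces to $Nh=\tfrac{1}{2\tau}h$, recovering the K\"ahler--Einstein case of Tian--Zhu \cite{TZ}. I expect the main obstacle to be exactly this Weitzenb\"ock step on a genuine, non-Einstein soliton: there the lower-order operators $\div^{\ast}\div_{f}$, $\Hess(v_{h})$ and $C(h,g)\Ric$ are all nontrivial --- unlike the K\"ahler--Einstein situation, where $h$ is transverse-traceless and $Nh$ collapses to $\tfrac12\Delta h+\Riem(h,\cdot)$ --- so one must verify carefully that they conspire, via the holomorphicity of $\D f$ and the soliton identity, to leave precisely the manifestly positive multiple of $h$; compare \cite{HM}.
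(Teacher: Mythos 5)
The paper never proves Theorem \ref{KRSinst}; it imports it from \cite{CHI}, \cite{TZ} (K\"ahler--Einstein case) and \cite{HM} (soliton case), so your proposal has to be measured against those arguments. Your overall strategy --- take a harmonic $(1,1)$-form $\rho$ independent of the K\"ahler class, make it primitive, set $h=\rho(J\cdot,\cdot)$, and exhibit $\langle Nh,h\rangle_{f}>0$ --- is exactly the strategy of those references, and the K\"ahler--Einstein half of what you write is sound: there $h$ is transverse-traceless, $C(h,g)=0$, and the Hodge/Lichnerowicz correspondence gives $Nh=\tfrac{1}{2\tau}h$.

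The genuine gap is the soliton case, which is the only content of the theorem beyond Tian--Zhu/CHI, and it sits precisely at the steps you label ``one expects'' and ``one then checks''. You never establish the twisted Weitzenb\"ock identity, nor the claimed cancellation or nonnegativity of $\div^{\ast}\div_{f}h$, $\tfrac12\Hess(v_{h})$ and $C(h,g)\Ric$. With your choice of the \emph{ordinary} Hodge-harmonic representative these terms are genuinely present: $\div_{f}h=-d\phi\neq 0$ in general, hence $v_{h}\neq 0$, and since $h$ is $J$-invariant there is no reason for $\int_{M}\langle\Ric,h\rangle e^{-f}dV_{g}$ to vanish (contrast the paper's main theorem, where $C(h,g)=0$ only because $h$ is $J$-anti-invariant there). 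Their combined contribution has no evident sign: $\langle\div^{\ast}\div_{f}h,h\rangle_{f}=\|\div_{f}h\|_{f}^{2}\geq 0$, but integration by parts gives $\tfrac12\langle\Hess (v_{h}),h\rangle_{f}=-\tfrac12\langle dv_{h},\div_{f}h\rangle_{f}$, whose sign depends on spectral data of $\Delta_{f}$, so the asserted outcome $\langle Nh,h\rangle_{f}=\tfrac{1}{2\tau}\|h\|_{f}^{2}$ is unjustified as stated. The way \cite{HM} actually closes this is to replace ordinary Hodge theory by the weighted ($e^{-f}$-twisted) theory: choose the representative of the extra $(1,1)$-class harmonic for the twisted Laplacian, so that the associated $h$ satisfies $\div_{f}h=0$, killing $v_{h}$ and the $\div^{\ast}\div_{f}$ term identically, and then prove a twisted Weitzenb\"ock identity using the soliton equation, treating the $C(h,g)\Ric$ term separately. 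Without a worked substitute for that step, your text is an outline of the known proof rather than a proof.
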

K\"ahler-Ricci solitons can be viewed as fixed points of a flow related to the Ricci flow (\ref{RF}) called the K\"ahler-Ricci flow, which in the Fano case can be written as
\begin{equation}\label{KRF}
\frac{\partial g}{\partial t} = -\Ric (g)+g,  \ \  g(0)=g_{0}.
\end{equation} 
One important point about this flow is that it preserves the K\"ahler class. A foundational result about this flow, due to Cao \cite{Caoinv}, is that it exists for all time. The convergence of it is an extremely subtle issue because the complex structure can jump in the limit at infinity. Hence the type of convergence one expects is rather weak. This is illustrated by the following example. 
\begin{theorem}[Tian-Zhu, \cite{TZJams}]
Let $M$ be a compact manifold which admits a K\"ahler-Ricci soliton $(g_{KRS},f)$. Then any solution of (\ref{KRF}) will converge to $g_{KRS}$ in the sense of Cheeger-Gromov if the initial metric $g_{0}$ is invariant under the maximal compact subset of the automorphism group of $M$.
\end{theorem}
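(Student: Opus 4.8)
The plan is to convert (\ref{KRF}) into a scalar parabolic equation, push uniform a priori estimates through it using Perelman's monotonicity, and then extract a limiting soliton that is pinned down by the Tian--Zhu uniqueness theorem. \textbf{Reduction.} Since (\ref{KRF}) preserves the K\"ahler class $2\pi c_1(M)$, I would write $g(t)=g_{KRS}+i\partial\bar\partial\varphi(t)$ and derive the complex Monge--Amp\`ere flow satisfied by the potential, schematically $\dot\varphi=\log\big(\det(g_{KRS}+i\partial\bar\partial\varphi)/\det g_{KRS}\big)+\varphi-h_{KRS}$ up to the soliton-field term and normalising constants, with $h_{KRS}$ the Ricci potential of $g_{KRS}$. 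The hypothesis that $g_{0}$ is invariant under the maximal compact subgroup $K\subset\mathrm{Aut}(M)$ enters here: the flow then stays inside the convex space of $K$-invariant potentials, and because $\nabla f$ generates a one-parameter subgroup of $K$ the soliton $g_{KRS}$ is itself $K$-invariant and can be used as a fixed background. Morally this is what prevents the complex structure from jumping at infinity.

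\textbf{A priori estimates.} Next I would invoke Perelman's estimates for the normalised K\"ahler--Ricci flow on a Fano manifold (as made rigorous by Sesum--Tian): a uniform upper bound on scalar curvature, a uniform diameter bound, and a uniform non-collapsing/Sobolev constant, all independent of $t$. Feeding these into a parabolic bootstrap for the Monge--Amp\`ere flow — bounding the oscillation of $\varphi$ via a modified energy, then Moser iteration and parabolic Schauder — should yield uniform $C^{\infty}$ control of $g(t)$.

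\textbf{Monotone functional and the limit.} I would then bring in the Tian--Zhu modified Mabuchi ($K$-energy) functional $\mu_{X}$ attached to the soliton field $X=\nabla f$, show it is non-increasing along (\ref{KRF}) with derivative $-\int_{M}|\cdot|^{2}e^{-f}\,dV_{g}\le 0$, and that the existence of a K\"ahler--Ricci soliton forces it to be bounded below, with minimum exactly on the soliton orbit. Hence the ``soliton defect'' decays to zero along some sequence $t_{j}\to\infty$, and by the estimates above together with Cheeger--Gromov compactness $(M,g(t_{j}))$ subconverges, modulo diffeomorphisms, to a shrinking gradient K\"ahler--Ricci soliton $(M,g_{\infty},J_{\infty})$ with zero defect. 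The Tian--Zhu uniqueness theorem for K\"ahler--Ricci solitons then identifies $(g_{\infty},J_{\infty})$ biholomorphically and isometrically with $(g_{KRS},J)$. To finish I would upgrade subsequential convergence to convergence of the full flow, either via a \L ojasiewicz--Simon inequality for $\mu_{X}$ near its minimum, or by the direct argument that once $g(t)$ is near the soliton orbit monotonicity of $\mu_{X}$ confines it there.

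\textbf{Main obstacle.} The crux is this last step: the limit is controlled only up to the action of the \emph{non-compact} group $\mathrm{Aut}(M)$, so one must simultaneously rule out the flow drifting off to infinity along automorphisms and rule out $J_{\infty}\neq J$. This is precisely where the $K$-invariance hypothesis does its work — it restricts the flow to a slice on which the residual symmetry is compact — and where the uniqueness theorem is indispensable. A secondary difficulty is that Perelman's estimates in the a priori step are themselves substantial (no local curvature bound is assumed) and must be interlocked carefully with the Monge--Amp\`ere structure to produce the higher-order bounds.
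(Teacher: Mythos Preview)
The paper does not prove this theorem. It is quoted verbatim from Tian--Zhu \cite{TZJams} as a motivational background result illustrating the subtlety of convergence under the K\"ahler--Ricci flow; no argument for it is given or even sketched in the paper. There is therefore nothing in the paper to compare your proposal against.

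For what it is worth, your outline is a faithful high-level summary of the strategy Tian and Zhu actually use in \cite{TZJams}: reduction to a Monge--Amp\`ere flow on $K$-invariant potentials, Perelman's uniform scalar curvature/diameter/non-collapsing estimates, monotonicity of the modified $K$-energy $\mu_{X}$, sequential compactness to a soliton limit, and identification of the limit via the Tian--Zhu uniqueness theorem. Your identification of the main obstacle --- controlling the drift along the non-compact automorphism group and ruling out a jump in complex structure, with the $K$-invariance hypothesis supplying the needed slice --- is also the crux of their paper. If you intend to flesh this out, be aware that the step from Perelman's zeroth-order bounds to uniform $C^{\infty}$ control of $g(t)$ is not a routine bootstrap and occupies a substantial portion of \cite{TZJams}; you would also need to be careful that the full-flow convergence (not just subsequential) in the original paper is obtained by a direct argument with $\mu_{X}$ rather than a \L ojasiewicz--Simon inequality.
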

The unstable perturbations in Theorem \ref{KRSinst} do not preserve the canonical class. Therefore, from the point of view of the K\"ahler-Ricci flow it is natural to consider perturbations which fix the K\"ahler class but allow the complex structure of the manifold to vary. This  was initiated by Tian and Zhu \cite{TZ}. 
\begin{definition}
Let $(M,g_{KRS})$ be a K\"ahler-Ricci soliton with complex structure $J_{KRS}$.  The space of perturbations $\mathcal{W}(g_{KRS})$ is defined as follows:
\begin{align*}
\mathcal{W}(g_{KRS}) = \bigg\lbrace h & \in s^{2}(TM^{\ast}) \ | \ \textrm{there is a family of K\"ahler metrics } (g_{t},J_{t})\\ 
 &\textrm{with } \frac{\partial g_{t}}{\partial t}\bigg|_{t=0} = h, [g_{t}(J_{t}\cdot,\cdot)]=c_{1}(M,J_{KRS}),\\ &   (g_{0},J_{0})=(g_{KRS}, J_{KRS})\bigg\rbrace.
\end{align*} 
\end{definition}  

The following result was our main motivation for considering this space of perturbations:
\begin{theorem}[Tian-Zhu, \cite{TZ}] \label{TZt}
Let $(M,g_{KE})$ be a K\"ahler-Einstein metric and let $h \in \mathcal{W}(g_{KE})$. Then
$$\langle N(h),h \rangle_{f} \leq 0.$$ 
\end{theorem}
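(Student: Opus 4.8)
The plan is to exploit the special features of the K\"ahler-Einstein case. When $(M,g_{KE})$ is K\"ahler-Einstein and normalised, the potential $f$ vanishes, $\Ric=g$, and $J_{KE}$ is parallel; consequently the stability operator $N$ of \eqref{Neqn} simplifies considerably and interacts well with the splitting of $s^{2}(TM^{\ast})$ into its $J_{KE}$-invariant (type $(1,1)$) and $J_{KE}$-anti-invariant (type $(2,0)+(0,2)$) summands. First I would reduce the problem: since $\nu$ is diffeomorphism invariant, $N$ annihilates every Lie derivative $\mathcal{L}_Xg$ and is self-adjoint with respect to $e^{-f}dV_g=dV_g$, so $\langle Nh,h\rangle_{f}$ is unchanged if we subtract a Lie derivative from $h$. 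As in Tian-Zhu's analysis, the constraint that the defining family $(g_t,J_t)$ keeps $[g_t(J_t\cdot,\cdot)]$ fixed lets us bring $h$ to the form $h=h_1+h_2+\mathcal{L}_Xg$, where $h_1=i\partial\bar\partial\phi$ is a K\"ahler-potential perturbation (type $(1,1)$) and $h_2$ is transverse-traceless of type $(2,0)+(0,2)$, recording the infinitesimal variation $\dot J$ of the complex structure. A short computation shows that the cross terms vanish, so $\langle Nh,h\rangle_{f}=\langle Nh_1,h_1\rangle_{f}+\langle Nh_2,h_2\rangle_{f}$ and it suffices to treat the two pieces separately.

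For the essential piece $h_2$: since $f$ is constant, $\Delta_f=\Delta$ and $\div_f h_2=\div h_2=0$, so $v_{h_2}$ satisfies $\Delta v_{h_2}+v_{h_2}=0$ and hence vanishes (on a Fano K\"ahler-Einstein manifold the first nonzero eigenvalue of $-\Delta$ exceeds $1$ by the Bochner/Lichnerowicz estimate), while $C(h_2,g)=(\int\tr_g h_2\,e^{-f}dV_g)/(\int Re^{-f}dV_g)=0$ because $h_2$ is trace-free. Thus $Nh_2=\tfrac12\Delta h_2+\Riem(h_2,\cdot)$. Now $h_2$ is precisely an $f$-essential variation in the sense of the Main Theorem, so applying that theorem with $f=0$ gives $\langle Nh_2,h_2\rangle_{f}=0$ immediately; equivalently, identifying $h_2$ with the $\bar\partial$-harmonic $T^{1,0}$-valued $(0,1)$-form representing the Kodaira-Spencer class and using $\Ric=g$ in a Bochner-Kodaira identity yields the same vanishing.

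It remains to prove $\langle Nh_1,h_1\rangle_{f}\leq 0$ for the potential part $h_1=i\partial\bar\partial\phi$, and this is the \emph{main obstacle}: none of the terms $\div^{\ast}\div_f h_1$, $\tfrac12\Hess(v_{h_1})$, $C(h_1,g)\Ric$ now vanishes, and $Nh_1$ has to be rewritten as a non-local scalar operator $L$ acting on $\phi$, whose non-positivity must be extracted from the Fano-K\"ahler-Einstein geometry --- concretely from the spectral gap for $-\Delta$ together with the identification of $\ker L$ with the holomorphy potentials (infinitesimal automorphisms), which have already been removed by the gauge choice. A softer alternative, at the price of invoking the convergence results cited above, is to observe that the K\"ahler-Ricci flow \eqref{KRF} preserves the K\"ahler class and increases $\nu$, and, as $g_{KE}$ exists, must attain the maximum of $\nu$ over its K\"ahler class at $g_{KE}$, so $D^{2}_{g_{KE}}\nu(h_1,h_1)\leq 0$ for class directions $h_1$. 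Combining $\langle Nh_1,h_1\rangle_{f}\leq 0$ with $\langle Nh_2,h_2\rangle_{f}=0$ gives $\langle N(h),h\rangle_{f}\leq 0$.
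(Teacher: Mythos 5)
Your overall architecture is the right one and matches how the statement sits in this paper: the paper itself does not prove Theorem \ref{TZt} but quotes it, and its route to the ``essential'' half is exactly yours — modulo diffeomorphisms $\mathcal{W}(g_{KE})$ splits as $\mathcal{A}^{(1,1)}\oplus H^{1}(M,TM)$, and on the $H^{1}(M,TM)$ (i.e.\ $f$-essential) directions the vanishing $\langle Nh_{2},h_{2}\rangle_{f}=0$ is precisely what Theorem \ref{Maint} gives when $f=0$. Your checks that $v_{h_{2}}=0$, $C(h_{2},g)=0$ and that the cross terms drop out are correct (and with the paper's normalisation $\Ric=g$ the Lichnerowicz bound does rule out the eigenvalue $1$), so that half of your argument is sound.

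The genuine gap is the $(1,1)$ piece, which is the substantive half of Tian--Zhu's theorem. The non-positivity of $N$ on $\partial\bar{\partial}$-exact $(1,1)$ perturbations is exactly the Proposition the paper cites from \cite{TZ} without proof; in Tian--Zhu it rests on rewriting $N$ as a scalar operator on the potential $\phi$ and a weighted Lichnerowicz-type eigenvalue estimate whose kernel is identified with holomorphy potentials. You explicitly leave this step unproved (``the main obstacle''), and the ``softer alternative'' you offer does not close it: to conclude that $g_{KE}$ maximises $\nu$ over its K\"ahler class you need convergence of the K\"ahler--Ricci flow \eqref{KRF} for an \emph{arbitrary} nearby initial metric in the class, whereas the convergence theorem available (Tian--Zhu \cite{TZJams}, quoted in the paper) assumes the initial metric is invariant under the maximal compact subgroup of the automorphism group; removing that hypothesis is itself a deep result, essentially of the same strength as the stability statement you are trying to prove, and one would additionally need continuity of $\nu$ along the Cheeger--Gromov limit. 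So as written the proof is incomplete: either carry out the spectral computation for $N$ on $\mathcal{A}^{(1,1)}$, or cite Tian--Zhu's proposition for that part, as the paper does, and reserve the new content for the $H^{1}(M,TM)$ directions.
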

Tian and Zhu then conjectured that a similar result should be true for Ricci solitons. Our formula in the Theorem \ref{Maint} shows that this might not be true in general.  The integral in the main theorem does not seem to have a sign in general. However, the examples we calculate in section 4 do all have $\langle N(h),h\rangle_{f}=0$; this seems be an artifact of their construction rather than a manifestation of some result in complex differential geometry.
\\
We mention here the related study of stability by Dai, Wang and Wei \cite{DWW}.  They prove that Kahler-Einstein metrics with negative scalar curvature are stable. There is also the recent work of Nefton Pali \cite{Pali} in this area. He considers a related functional known in the literature as the $W$-functional (here one is free to pick a volume form whereas in the definition  of the $\nu$-functional one is determined by the metric).

\subsection{Notation and convention}
We use the curvature convention that $\Riem(X,Y)Z = \nabla_{Y}\nabla_{X}Z-\nabla_{X}\nabla_{Y}Z+\nabla_{[X,Y]}Z$. The convention for divergence that we adopt is $\div(h) = tr_{12}(\nabla h)$, the rough Laplacian $$\Delta h = \div(\nabla h) = -\nabla^{\ast}\nabla h$$ is then negative definite.
Set
$$
\langle \cdot, \cdot\rangle_f = \int_M \langle \cdot, \cdot\rangle e^{-f}dV_g
$$
to be the twisted inner product on tensors at a Ricci soliton $(M,g,f)$. We will denote pointwise inner products induced on tensor bundles by $g$ with round brackets $(\cdot, \cdot)$. The adjoint of a differential operator (such as $\nabla$) with respect to this inner product will be denoted with a subscipt $f$ (i.e. $div_f$) throughout. 

\section{Background on variations of complex stucture}
\subsection{Variations of complex structure}
We recall that an almost complex stucture on a manifold $M$ is a section $J$ of the endomorphism bundle $End(TM)$ satisfying $J^{2}=-id$.  For $M$ to be a complex manifold we require that the complex structure is \emph{integrable}. By the Newlander-Nirenberg theorem we may take integrable to mean that the Nijenhuis tensor $\mathcal{N}(J)=0$. We will be concerned with infinitesimal variations of complex structure that are modelled on those coming from a one parameter family of complex structures $J_{t}$.  As we are only working at an infinitesimal level, we don't actually mind if our variations are induced by such a family. 

\begin{definition}[Infinitesimal variation of complex structure]
Let $(M,g,J)$ be a K\"ahler manifold, a tensor $\zeta \in End(TM)$ is called an infinitesimal variation of complex structure if it satisfies the two equations:
\begin{equation}\label{vComp}
\zeta J+J\zeta=0,
\end{equation} 
\begin{equation}\label{vint}
\dot{\mathcal{N}}(\eta)=0.
\end{equation}
\end{definition}
The equation (\ref{vComp})  simply says that the $J_{t}$ are almost complex structures, the equation (\ref{vint}) comes from requiring that they are integrable. In the above definition we are viewing $\zeta$ as a section of the bundle $End(TM)$ which is defined for any manifold.  Switching in the usual manner to the complex viewpoint, Equation (\ref{vComp}) can be thought of as saying that $\zeta$ is a section of the bundle $\Lambda^{(0,1)}\otimes TM^{(1,0)}$. We will variously view the variation as an element of the real bundle $End(TM)$, a section of the bundle $\Lambda^{(0,1)}\otimes TM^{(1,0)}$ and, using the metric to lower indices, as a section of $TM^{\ast} \otimes TM^{\ast}$ and $\Lambda^{(0,1)} \otimes \Lambda^{(0,1)}$.  We note that in complex coordinates equations (\ref{vComp})  and (\ref{vint}) become
 $$\zeta_{\alpha}^{\beta}=0 \text{  and } \nabla_{\alpha}\zeta_{\beta \gamma} = \nabla_{\beta}\zeta_{\alpha \gamma}.$$
The bundle $\Lambda^{(0,1)} \otimes TM^{(0,1)}$ is an element of the Dolbeault complex
$$TM^{(1,0)} \stackrel{\bar{\partial}}{\rightarrow} \Lambda^{(0,1)}\otimes TM^{(1,0)} \stackrel{\bar{\partial}}{\rightarrow} \Lambda^{(0,2)}\otimes TM^{(1,0)} \stackrel{\bar{\partial}}{\rightarrow} ...,$$
where $\bar{\partial}$ is the usual d-bar operator associated to a holomorphic vector bundle over a complex manifold. Equation (\ref{vint}) is equivalent to requiring that $\bar{\partial}\zeta =0$.

Analogous to Tian-Zhu \cite{TZ} and following Koiso \cite{Koi1}, we will decompose the space of infinitesimal variations into \emph{trivial} variations and \emph{$f$-essential} variations.

By analogy with the twisted inner product, set 
$$
 \Delta_{\overline{\partial},f} := \overline{\partial}\overline{\partial}_f^* + \overline{\partial}_f^*\overline{\partial} $$
to be the twisted $\partial$-Laplacian. 

\begin{definition}[$f$-essential variation]
Let $\zeta$ be an infinitesimal variation of the complex structure $J$.  We say $\zeta$ is trivial if $\zeta = L_{Z}J$ for a smooth vector field $Z \in TM$.  A variation $\zeta$ is said to be $f$-essential if 
$$\int_{M}\langle\zeta, L_{Z}J\rangle e^{-f}dV_{g}=0$$
for all $Z\in \Gamma(TM)$.
\end{definition} 
The following lemma gives a useful characterisation of $f$-essential varitations.
\begin{lemma}[cf. Lemma 6.4 in \cite{Koi1}]
Let $\zeta$ be a $f$-essential variation and let $h(\cdot,\cdot) = \omega(\cdot,\zeta\cdot) $.  If $h$ is symmetric, then
\begin{enumerate}
\item $\bar{\partial}_{f}^{\ast}\zeta =0$, 
\item  $\div_{f}h = 0.$
\end{enumerate}
In particular, a $f$-essential variation is $\Delta_{\bar{\partial},f}$-harmonic. 
\end{lemma}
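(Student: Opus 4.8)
The plan is to exploit the $f$-essential condition together with the integrability of $\zeta$ (equivalently $\overline{\partial}\zeta = 0$) to show that $\zeta$ is $\overline{\partial}_f^*$-closed, from which both the harmonicity statement and the divergence-free statement for $h$ will follow. The key algebraic input is that trivial variations $L_Z J$ span, up to the twisted inner product, exactly the image of $\overline{\partial}$ acting on sections of $TM^{(1,0)}$: for a vector field $Z$ with $(1,0)$-part $Z^{(1,0)}$, one has (at an infinitesimal level, using that $J$ is integrable) that $L_Z J$ corresponds, under the identifications described in the text, to $\overline{\partial} Z^{(1,0)} \in \Gamma(\Lambda^{(0,1)} \otimes TM^{(1,0)})$ up to a term that is itself trivial. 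I would first verify this correspondence carefully in complex coordinates, since the whole argument hinges on ``trivial variation'' $\leftrightarrow$ ``$\overline{\partial}$-exact''. Then the $f$-essential condition $\langle \zeta, L_Z J\rangle_f = 0$ for all $Z$ becomes $\langle \zeta, \overline{\partial} Z^{(1,0)}\rangle_f = 0$ for all $Z^{(1,0)}$, which by definition of the twisted adjoint says $\langle \overline{\partial}_f^* \zeta, Z^{(1,0)}\rangle_f = 0$ for all $Z^{(1,0)}$, hence $\overline{\partial}_f^* \zeta = 0$. This gives (1).

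For (2), I would translate $\overline{\partial}_f^*\zeta = 0$ into the statement $\div_f h = 0$ using $h(\cdot,\cdot) = \omega(\cdot,\zeta\cdot)$ and the hypothesis that $h$ is symmetric. In complex coordinates, $\zeta$ is represented by $\zeta_{\beta\gamma}$ (a $(0,1)\otimes(0,1)$-tensor after lowering, symmetric because $h$ is symmetric), and $h$ has components $h_{\beta\gamma} = \omega_{\beta\bar\delta}\zeta^{\bar\delta}_{\ \gamma}$ — essentially $\zeta$ with one index raised by the metric — so that the $(1,0)$-divergence of $h$ and $\overline{\partial}_f^*\zeta$ differ only by the metric contraction and conjugation. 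Concretely, $\overline{\partial}_f^*\zeta = 0$ reads $g^{\alpha\bar\beta}(\nabla_{\bar\beta}\zeta_{\alpha\gamma} - (\nabla_{\bar\beta}f)\zeta_{\alpha\gamma}) = 0$, which is exactly $\div_f h = 0$ expressed in the Kähler setting (the $(0,1)$-part of $\div_f h$ vanishes by the same computation with bars reversed, using symmetry of $h$, and the mixed part vanishes since $h$ is of pure type). I would spell this out component-wise; it is routine Kähler index-juggling once the identification of $h$ with $\zeta$ is fixed.

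Finally, for the harmonicity claim, I combine (1) with the integrability of $\zeta$: since $\zeta$ is an infinitesimal variation of complex structure, equation (\ref{vint}) gives $\overline{\partial}\zeta = 0$, and (1) gives $\overline{\partial}_f^*\zeta = 0$, so $\Delta_{\overline{\partial},f}\zeta = \overline{\partial}\overline{\partial}_f^*\zeta + \overline{\partial}_f^*\overline{\partial}\zeta = 0$. This is immediate once the first two parts are in hand.

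\textbf{Main obstacle.} The step I expect to be most delicate is the precise identification of trivial variations $L_Z J$ with $\overline{\partial}$-exact sections — in particular handling the fact that $L_Z J$ for general (not necessarily holomorphic or real-holomorphic) $Z$ decomposes into a piece that is $\overline{\partial}(Z^{(1,0)})$ and a piece involving $\partial(Z^{(0,1)})$ or similar, and checking that the latter does not obstruct the argument (it should either vanish against $\zeta$ by type considerations, since $\zeta \in \Lambda^{(0,1)}\otimes TM^{(1,0)}$ is of pure type, or be absorbable). One must be careful that the $f$-essential condition quantifies over \emph{all} real vector fields $Z$, and extract from this the vanishing of $\overline{\partial}_f^*\zeta$ against all $(1,0)$-fields; the type decomposition of $L_Z J$ is what makes this work, and getting the adjoint/conjugation bookkeeping right (especially the role of $e^{-f}$ in defining $\overline{\partial}_f^*$) is where errors are most likely to creep in.
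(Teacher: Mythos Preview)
Your argument is correct, and for part (1) and the harmonicity conclusion it coincides with the paper's proof: the paper also invokes the identity $\overline{\partial}_{\cdot}Z = -\tfrac{1}{2}JL_ZJ(\cdot)$ to convert the $f$-essential condition into $\langle \overline{\partial}Z,\zeta\rangle_f = 0$ for all $Z$, hence $\overline{\partial}_f^*\zeta = 0$. Your ``main obstacle'' is therefore not an obstacle at all --- the type decomposition you worry about is exactly absorbed by this identity, which the paper takes as known.

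For part (2) your route genuinely differs. You deduce $\div_f h = 0$ \emph{from} part (1) by identifying, in complex coordinates, $(\div_f h)_{\bar\beta}$ with (a constant times) $g_{\gamma\bar\beta}(\overline{\partial}_f^*\zeta)^\gamma$, using the symmetry of $h$ to contract on the correct index. The paper instead proves (2) independently, straight from the $f$-essential condition: it rewrites $\omega(\cdot,L_ZJ\cdot) = L_Zg - L_Z\omega(\cdot,J\cdot) = 2\div^*Z^\flat - (d\iota_Z\omega)(\cdot,J\cdot)$ via the Cartan formula, then observes that the second term pairs to zero against $h$ because $h(\cdot,J\cdot)$ is symmetric while $d\iota_Z\omega$ is antisymmetric, leaving $\langle \div^*Z^\flat, h\rangle_f = 0$ for all $Z$. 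Your approach is more economical once (1) is in hand and makes transparent that (1) and (2) are really the same equation under the identification $h\leftrightarrow\zeta$; the paper's approach is coordinate-free and shows more directly where the symmetry hypothesis enters.
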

\begin{proof}

1) As $\zeta$ is $f$-essential, 
$$
\int_M \langle L_ZJ, \zeta \rangle e^{-f} =0
$$
for all $Z \in \Gamma(TM)$. The Lie derivative of the complex stucture is related to the $\bar{\partial}$-operator by
$$\overline{\partial}_{\cdot} Z = -\frac{1}{2}JL_ZJ(\cdot).$$
Hence, up to a constant, $\langle L_ZJ, \zeta\rangle_f = \langle \overline{\partial} Z, \zeta \rangle_f$ and $\overline{\partial}^*_{f} \zeta = 0$, as claimed.\\
\\
2) We begin by noting that $\zeta$ being $f$-essential means that 
$$\langle L_{Z}J,\zeta\rangle_{f}= \langle \omega(\cdot, L_{Z}J(\cdot)),h\rangle_{f}=0.$$
Rewriting and using the Cartan formula we have
$$\omega(\cdot, L_{Z}J(\cdot)) = L_{Z}g(\cdot,\cdot)-L_{Z}\omega(\cdot,\cdot) =2div^{\ast}Z^{\flat}(\cdot,\cdot))-(d\circ\iota_{Z}\omega)(\cdot,J\cdot).$$
The result follows by noting that
$$\langle(d\circ\iota_{Z}\omega)(\cdot,J\cdot),h\rangle_{f} = -\langle(d\circ\iota_{Z}\omega)(\cdot,\cdot),h(\cdot,J\cdot)\rangle_{f},$$
and that $h(\cdot,J\cdot)$ is symmetric. 
\end{proof}
In the previous lemma we have assumed that $h$ is symmetric.  This is not strictly necessary on Fano manifolds as one can show that an antisymmetric, $J$-anti-invariant 2-tensor defines a global holomorphic 2-from.  Then one can appeal to a classical result of Bochner to show that on a Fano manifold such a form is zero (c.f. \cite{Besse} 11.24). Tian and Zhu give a straightforward proof of this fact in the case one is at a K\"ahler-Einstein metric \cite{TZ}.

Tian and Zhu \cite{TZ} decompose the space $\mathcal{W}(g)$ modulo the
action of the diffeomorphism group.  They show that
$$ \mathcal{W}(g)/\mathcal{D}(M) = \mathcal{A}^{(1,1)}\bigoplus H^{1}(M,TM)$$
where $\mathcal{A}^{(1,1)}$ is the space of $\partial\bar{\partial}$-exact
(1,1)-forms and $H^{1}(M,TM)$ is the usual cohomology for the holomorphic
vector bundle $TM$.  Tian and Zhu then show that for a general K\"abler-Ricci soliton, $N |_{\mathcal{A}^{(1,1)}}\leq 0$ so that potentially destabilising elements of
$\mathcal{W}$ actually lie in $H^{1}(M,TM)$ (they then show that $N$
vanishes on this space when $g$ is an Einstein metric). Hence we will only
consider perturbations in $H^{1}(M,TM)$ and we will use the special
representatives given by $f$-essential perturbations. Formally we have:
\begin{proposition}[Tian-Zhu, \cite{TZ}]
Let $(M,g_{KRS},J)$ be a K\"ahler-Ricci soliton. Then we have the following decomposition
$$\mathcal{W}(g_{KRS})/\mathcal{D}(M) \cong \mathcal{A}^{(1,1)}(M,J) \bigoplus H^{1}(M,TM).$$
where $\mathcal{D}(M)$ is the diffeomorphism group of $M$.
The operator $N$ is non-positive when restricted to $\mathcal{A}^{(1,1)}(M,J).$
\end{proposition}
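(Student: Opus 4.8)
The statement has two halves, the identification of the quotient $\mathcal{W}(g_{KRS})/\mathcal{D}(M)$ and the sign of $N$ on the summand $\mathcal{A}^{(1,1)}$, and for both I would follow the strategy of Tian--Zhu \cite{TZ}. For the decomposition, start with $h\in\mathcal{W}(g_{KRS})$ and a generating family $(g_t,J_t)$, and differentiate at $t=0$ to extract the pair $(h,\zeta)$ with $\zeta=\dot J_0$ an infinitesimal variation of complex structure and $h=\dot g_0$; the constraint $[\omega_t]=c_{1}(M,J_{KRS})$ forces $[\dot\omega_0]=0$ in $H^{(1,1)}(M)$. Since $\bar\partial\zeta=0$, Hodge theory in the Dolbeault complex of $TM$ gives $\zeta=\zeta_{\mathcal H}+\bar\partial W$ with $\zeta_{\mathcal H}$ harmonic and $W$ a section of $TM^{(1,0)}$; by the identity $\bar\partial_{\cdot}Z=-\tfrac{1}{2}JL_ZJ(\cdot)$ used in the preceding Lemma, the exact part corresponds to a trivial variation in the sense of the Definition above and hence can be removed by acting with $\mathcal{D}(M)$. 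This produces a well-defined map $\mathcal{W}(g_{KRS})/\mathcal{D}(M)\to H^{1}(M,TM)$, $[h]\mapsto[\zeta_{\mathcal H}]$, whose kernel consists, modulo $\mathcal{D}(M)$, of deformations with $\dot J_0=0$: these are variations of the metric through K\"ahler metrics compatible with the fixed $J_{KRS}$ and lying in the fixed class, so $\dot\omega_0$ is a closed $J$-invariant $(1,1)$-form with $[\dot\omega_0]=0$, whence by the $\partial\bar\partial$-lemma $\dot\omega_0=i\partial\bar\partial\varphi$ and $h\in\mathcal{A}^{(1,1)}(M,J)$. Splitting this short exact sequence, by attaching to each harmonic $\zeta_{\mathcal H}$ a compatible metric variation preserving the class, gives the isomorphism. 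The step needing care is exactly the existence of such a compatible metric variation for a given $\zeta_{\mathcal H}$, which one obtains from linearised integrability together with the $\partial\bar\partial$-lemma, as in \cite{TZ}.

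For $N|_{\mathcal{A}^{(1,1)}}\le 0$, take $h=i\partial\bar\partial\varphi$; as a symmetric $2$-tensor this is the $J$-invariant part of $\Hess\varphi$, with $\tr_g h$ a multiple of the Laplacian of $\varphi$ and $\div h$ determined by $\varphi$ through the K\"ahler identities. I would substitute $h$ into the Cao--Hamilton--Ilmanen expression (\ref{Neqn}) for $Nh$ and pair it with $h$ in the twisted inner product $\langle\cdot,\cdot\rangle_f$. Using the Bochner and K\"ahler identities to commute covariant derivatives and integrating by parts against the weight $e^{-f}$, the contributions of the curvature term $\Riem(h,\cdot)$ and of $\div^{\ast}\div_f h$ should combine with $\tfrac{1}{2}\Hess(v_h)$, after invoking the equation defining $v_h$ and the soliton equation $\Ric+\Hess f=\tfrac{1}{2\tau}g$, into a complete square carrying a non-positive coefficient, while the remaining term $C(h,g)\Ric$ is dealt with using the first-order constraint $[\dot\omega_0]=0$ and a Futaki-type argument as in \cite{TZ}. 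The computation ultimately reduces to a non-positive estimate for the twisted operator $\Delta_{\bar\partial,f}$ acting on $\varphi$ (with the operator $\Delta_f+\tfrac{1}{2\tau}$ entering through $v_h$), yielding $\langle Nh,h\rangle_f\le 0$. Heuristically this is as one would expect: perturbations in $\mathcal{A}^{(1,1)}$ fix both the complex structure and the K\"ahler class, and along such families the soliton is the unique critical point and is dynamically attracting.

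The hard part is the second half, and within it the bookkeeping for the $\tfrac{1}{2}\Hess(v_h)$ and $C(h,g)\Ric$ corrections. At a K\"ahler--Einstein metric these either vanish or simplify drastically because $f$ is constant: the weight $e^{-f}$ and the operators $\Delta_f$, $\div_f$ reduce to their untwisted counterparts, $C(h,g)$ is seen to vanish on $\mathcal{A}^{(1,1)}$ directly, and the defining equation for $v_h$ becomes tractable. For a genuine K\"ahler-Ricci soliton the non-constancy of $f$ enters every integration by parts, and the real work is in showing that, despite this, the curvature, divergence and Hessian terms still reorganise into a term of a single sign with the $C(h,g)\Ric$ correction under control.
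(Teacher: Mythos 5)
You should first be aware that the paper does not prove this proposition at all: it is imported verbatim from Tian--Zhu \cite{TZ} (the surrounding text says ``They show that\dots'' and ``Tian and Zhu then show\dots''), so there is no in-paper proof to measure your argument against; what the paper does supply are the infinitesimal ingredients your first half needs, namely the identity $\bar\partial Z=-\tfrac{1}{2}JL_{Z}J(\cdot)$, the characterisation of $f$-essential variations, and the Bochner-type vanishing on Fano manifolds that guarantees the symmetry of $h(\cdot,\cdot)=\omega(\cdot,\zeta\cdot)$. Measured against the Koiso/Tian--Zhu argument, your sketch of the decomposition is the standard one and is essentially sound: $\bar\partial$-closedness of $\dot J_{0}$, Hodge decomposition in the Dolbeault complex of $TM$, removal of the $\bar\partial$-exact part by the diffeomorphism group, and the $\partial\bar\partial$-lemma identifying the kernel with $\mathcal{A}^{(1,1)}(M,J)$. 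The step you flag as delicate (realising a harmonic class by a compatible pair $(h,\zeta)$ within the fixed class) is handled concretely by taking $h=\omega(\cdot,\zeta\cdot)$, for which the fixed K\"ahler form remains compatible to first order and whose symmetry is exactly the Bochner point above; it would strengthen your write-up to say this rather than appeal vaguely to ``linearised integrability''.

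The genuine gap is in the second half. For a non-trivial soliton the claim that the terms $\Riem(h,\cdot)$, $\div^{\ast}\div_{f}h$, $\tfrac{1}{2}\Hess(v_{h})$ and $C(h,g)\Ric$ ``reorganise into a complete square'' is precisely the content of the Tian--Zhu computation, and your outline asserts it without exhibiting it: $C(h,g)$ does not vanish on $\mathcal{A}^{(1,1)}$ in general (the $J$-anti-invariance argument used in the paper's Section 3 applies only to the $H^{1}(M,TM)$ summand, not here), the defining equation $\Delta_{f}v_{h}+\tfrac{v_{h}}{2\tau}=\div_{f}\div_{f}(h)$ must actually be solved against $h$ coming from a potential $\varphi$ so that everything is expressed through a weighted operator acting on $\varphi$, and the final sign rests on a spectral (Poincar\'e-type) inequality for that weighted operator whose borderline case is tied to holomorphic vector fields. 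None of this bookkeeping appears in your proposal, and the closing heuristic about dynamical attractivity is not a substitute for it. As written, the first half is an acceptable sketch, but the second half is a plan rather than a proof; to complete it you would need to carry out the reduction to the weighted eigenvalue estimate and prove or quote that estimate, or else simply cite \cite{TZ} for the whole statement, as the paper itself does.
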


\section{Proof of main Theorem}
Consider a $f$-essential variation of the complex structure $h \in H^{1}(M,TM)$. Firstly, as $h$ is $J$-anti-invariant it is apparent that $C(h,g)=0$. Thus
$$ \langle N(h),h\rangle_{f} = \langle \frac{1}{2}\Delta_{f} h+\Riem(h,\cdot),h\rangle_{f}.$$

In order to evaluate the above we will use a Weitzenb\"ock formula. In order to explain the formula we will digress briefly into the spinorial construction used in \cite{DWW}. This is a powerful generalisation of the techniques used by Koiso in \cite{Koi1}. 

As $M$ is Fano it has a canonical $\textrm{spin}^{c}$ structure and parallel spinor $\sigma_{0}\in \Gamma(\mathcal{S}^{c})$ where $\mathcal{S}^{c}\rightarrow M$ is the $\textrm{spin}^{c}$ spinor bundle. This induces a map 
\begin{align*}
&\Phi : s^{2}(TM^{\ast})\rightarrow \mathcal{S}^{c}\otimes TM^{\ast},\\
&\Phi(h) = h_{ij}e_{i}\cdot\sigma_{0}\otimes e^{j},
\end{align*} 
where $\{e_{i}\}$ is a orthonormal basis of $TM$ and $e_{i}\cdot\sigma_{0}$ denotes Clifford multiplication in $\mathcal{S}^{c}$. 

For $1\leq i\leq m$, following \cite{DWW} choose
$$
X_i = \frac{e_i - \sqrt{-1}Je_i}{\sqrt{2}} ,  \bar{X_i}= \frac{e_i + \sqrt{-1}Je_i}{\sqrt{2}}.
$$
Then $\lbrace X_1, \dots, X_m\rbrace$ is a local unitary frame for $T^{1,0}M$. Set $\lbrace \theta^1, \dots, \theta^m\rbrace$ to be its dual frame. Then 
$$
\Phi(h) = h(\bar{X_i},\bar{X_j}) \bar{\theta^i}\otimes \bar{\theta^j}. 
$$
This can be identified with 
$$
\Psi(h) = h(\bar{X_i},\bar{X_j})\bar{\theta^i}\otimes X_j \in \wedge^{0,1}(TM)
$$
where $TM$ is the holomorphic tangent bundle. 

\begin{lemma}[Dai-Wang-Wei \cite{DWW1} Lemma No 2.3]\label{l1}
For $h,\tilde{h} \in s^{2}(TM^{\ast})$,
$$Re(\Phi(h),\Phi(\tilde{h})) = ( h,\tilde{h}).$$
\end{lemma}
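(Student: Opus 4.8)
The plan is to reduce this pointwise, purely algebraic identity to a computation of the spinor inner products $(e_i\cdot\sigma_0, e_k\cdot\sigma_0)$. First I would expand both sides in the orthonormal frame $\{e_i\}$. Since the metric on $\mathcal{S}^{c}\otimes TM^{\ast}$ is the tensor product of the Hermitian metric on $\mathcal{S}^{c}$ and the Riemannian metric on $TM^{\ast}$, and $\{e^{j}\}$ is orthonormal, we get
$$(\Phi(h),\Phi(\tilde h)) = \sum_{i,j,k,l} h_{ij}\tilde h_{kl}\,(e_i\cdot\sigma_0, e_k\cdot\sigma_0)\,\delta_{jl} = \sum_{i,j,k} h_{ij}\tilde h_{kj}\,(e_i\cdot\sigma_0, e_k\cdot\sigma_0),$$
so the statement follows once we know that $\mathrm{Re}\,(e_i\cdot\sigma_0, e_k\cdot\sigma_0) = \delta_{ik}$.

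To compute this I would use that Clifford multiplication by a real tangent vector is skew-Hermitian for the spinor metric, so $(e_i\cdot\sigma_0, e_k\cdot\sigma_0) = -(\sigma_0, e_i\cdot e_k\cdot\sigma_0)$. When $i=k$ the Clifford relation $e_i\cdot e_i\cdot\sigma_0 = -\sigma_0$ gives $(e_i\cdot\sigma_0, e_i\cdot\sigma_0) = |\sigma_0|^2$, and since $\sigma_0$ is parallel its norm is constant, so after normalising $|\sigma_0|\equiv 1$ this is $1$. When $i\neq k$, the operator $e_i\cdot e_k$ is skew-Hermitian, being the composition of two anticommuting skew-Hermitian operators, so $(\sigma_0, e_i\cdot e_k\cdot\sigma_0)$ is purely imaginary and $\mathrm{Re}\,(e_i\cdot\sigma_0, e_k\cdot\sigma_0)=0$. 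Hence $\mathrm{Re}\,(e_i\cdot\sigma_0, e_k\cdot\sigma_0)=\delta_{ik}$.

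Finally, taking real parts of the displayed expansion and using that $h,\tilde h$ are real symmetric tensors (so the coefficients $h_{ij}\tilde h_{kj}$ are real) yields $\mathrm{Re}(\Phi(h),\Phi(\tilde h)) = \sum_{i,j,k} h_{ij}\tilde h_{kj}\,\delta_{ik} = \sum_{i,j} h_{ij}\tilde h_{ij} = (h,\tilde h)$. The only subtleties are bookkeeping ones: fixing the sign convention under which Clifford multiplication is skew- rather than self-adjoint (with the opposite convention the sign in the Clifford relation flips to match and the conclusion is unchanged), and recording that a parallel spinor has constant norm so the normalisation $|\sigma_0|\equiv 1$ is harmless. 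There is no analytic content — the identity is local and holds verbatim for arbitrary symmetric $2$-tensors, which is exactly why it can later be applied to $f$-essential variations viewed as elements of $s^{2}(TM^{\ast})$.
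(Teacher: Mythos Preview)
Your proof is correct. The paper does not supply its own proof of this lemma; it simply cites it from Dai--Wang--Wei \cite{DWW1}. Your argument is the standard direct verification: expand in the orthonormal frame, reduce to computing $\mathrm{Re}\,(e_i\cdot\sigma_0,e_k\cdot\sigma_0)$, and use the Clifford relations together with skew-Hermiticity of Clifford multiplication to show this equals $\delta_{ik}$. The step that $e_i\cdot e_k$ is skew-Hermitian for $i\neq k$ (composition of anticommuting skew-Hermitian operators) is exactly what is needed to kill the off-diagonal terms after taking real parts, and the normalisation $|\sigma_0|\equiv 1$ is harmless since $\sigma_0$ is parallel. There is nothing to compare against in the paper itself.
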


Recall that $\mathcal{S}^c$ has an induced K\"ahler structure $J_c$. Choosing normal coordinates at a point, it is not hard to see that 
$(\Delta_{\overline{\partial}_c}\circ \Psi)(h) = (\Psi\circ\Delta_{\overline{\partial}})(h)$.
Moreover, under this  identification of $\Phi(h)$ with $\Psi(h)$, the Dirac operator $D$ is identified with $\sqrt{2}(\overline{\partial} - \overline{\partial}^*)$. Thus $D^*D$ is identified with $-2 \Delta_{\overline{\partial}}$.

The main result we need is the following Weitzenb\"ock formula:
\begin{lemma}[Dai-Wang-Wei \cite{DWW} Lemma 2.3]\label{l2}
Let $h \in s^{2}(TM^{\ast})$ and let $D$ be the Dirac operator. Then
\begin{equation}\label{DWeit}
D^{\ast}D(\Phi(h)) = \Phi(\nabla^{\ast}\nabla h - 2\Riem (h,\cdot) +\Ric \circ h - h\circ i\rho)
\end{equation}
where $\rho$ is the Ricci form. 
\end{lemma}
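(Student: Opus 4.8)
\emph{Sketch of proof.} The plan is to deduce (\ref{DWeit}) from the Lichnerowicz--Weitzenb\"ock identity for the twisted spin$^{c}$ Dirac operator $D$, in which the Clifford action is carried by the spinor factor $\mathcal{S}^{c}$ while $TM^{\ast}$ plays the role of a coefficient bundle. For such an operator one has a Bochner formula of the shape
$$ D^{\ast}D \;=\; \nabla^{\ast}\nabla \;+\; \frac{R}{4} \;+\; \frac{1}{2}c(F_{\mathcal{L}}) \;+\; \mathcal{R}^{TM^{\ast}}, $$
where $\nabla^{\ast}\nabla$ is the connection Laplacian on $\mathcal{S}^{c}\otimes TM^{\ast}$, $F_{\mathcal{L}}$ is the curvature of the determinant line bundle of the canonical spin$^{c}$ structure acting on $\mathcal{S}^{c}$ by Clifford multiplication, and $\mathcal{R}^{TM^{\ast}}$ is the curvature endomorphism induced on the coefficient bundle $TM^{\ast}$ by the Levi-Civita connection, contracted against the Clifford action on $\mathcal{S}^{c}$. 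The proof is then a matter of evaluating each of the four terms on $\Phi(h)=h_{ij}\,e_{i}\cdot\sigma_{0}\otimes e^{j}$ and recognising the result as $\Phi$ of the right-hand side of (\ref{DWeit}).

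The connection-Laplacian term is immediate: since $\sigma_{0}$ is parallel and the frame $\{e_{i}\}$ may be taken parallel at the point under consideration, $\Phi$ intertwines the Levi-Civita connection on $s^{2}(TM^{\ast})$ with the tensor-product connection, so that $\nabla^{\ast}\nabla\,\Phi(h)=\Phi(\nabla^{\ast}\nabla h)$, reproducing the first term of (\ref{DWeit}) verbatim.

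The bulk of the work is the term $\mathcal{R}^{TM^{\ast}}\Phi(h)$. Writing the action of $R^{TM^{\ast}}(e_{k},e_{l})$ on $e^{j}$ through the Riemann tensor and reorganising the resulting double sum using the Clifford relations $e_{k}\cdot e_{l}+e_{l}\cdot e_{k}=-2\delta_{kl}$ and the first Bianchi identity, one finds it splits into a piece in which both Clifford generators survive as a genuine two-form, assembling to $-2\,\Phi(\Riem(h,\cdot))$, and a piece in which a contraction of the curvature occurs, giving $\Phi(\Ric\circ h)$. Obtaining the coefficients ($-2$ and $+1$) and their signs correctly under the curvature convention adopted above is the computational heart of the argument and the step I expect to be the main obstacle.

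Finally $\frac{R}{4}+\frac{1}{2}c(F_{\mathcal{L}})$ must be combined. Applying the Lichnerowicz formula to the parallel spinor $\sigma_{0}$ itself (for which $D\sigma_{0}=0$ and $\nabla^{\ast}\nabla\sigma_{0}=0$) yields $\frac{R}{4}\sigma_{0}=-\frac{1}{2}c(F_{\mathcal{L}})\sigma_{0}$; commuting $c(F_{\mathcal{L}})$ past the Clifford factor $e_{i}\cdot$ in $\Phi(h)$ then makes the scalar-curvature contributions cancel, leaving only the commutator $\frac{1}{2}[c(F_{\mathcal{L}}),e_{i}\cdot]\sigma_{0}$. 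On the Fano manifold $M$ the determinant line bundle of the canonical spin$^{c}$ structure is $K_{M}^{-1}$, with curvature a constant multiple of the Ricci form, so this commutator amounts to Clifford multiplication by a contraction of $i\rho$, which under $\Phi$ becomes precomposition of $h$ with $i\rho$; keeping track of the normalisation (and using the symmetry of $\rho$) produces $-\Phi(h\circ i\rho)$. Summing the four contributions gives
$$ D^{\ast}D(\Phi(h)) \;=\; \Phi\bigl(\nabla^{\ast}\nabla h-2\,\Riem(h,\cdot)+\Ric\circ h-h\circ i\rho\bigr), $$
which is (\ref{DWeit}). A route that avoids the spin$^{c}$ formalism entirely is Koiso's direct Bochner computation on $\wedge^{0,1}(TM)$ together with the Bochner--Kodaira comparison of $\Delta_{\bar{\partial}}$ and $\nabla^{\ast}\nabla$; this is essentially the approach taken in \cite{YWang}.
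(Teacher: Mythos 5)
You should first be aware that the paper does not prove this statement at all: Lemma \ref{l2} is imported verbatim from Dai--Wang--Wei \cite{DWW} (their Lemma 2.3) and used as a black box, so there is no proof in the paper to compare yours against. What you have sketched is essentially the proof from the cited source: the general Lichnerowicz--Weitzenb\"ock formula for the spin$^{c}$ Dirac operator twisted by the coefficient bundle $TM^{\ast}$, evaluated on $\Phi(h)=h_{ij}e_{i}\cdot\sigma_{0}\otimes e^{j}$, with the parallelism of $\sigma_{0}$ used to identify $\nabla^{\ast}\nabla\Phi(h)=\Phi(\nabla^{\ast}\nabla h)$ and to organise the curvature terms. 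This is the right route, and your observation that the determinant line bundle of the canonical spin$^{c}$ structure is $K_{M}^{-1}$ with curvature a multiple of the Ricci form is exactly what brings $\rho$ into the formula; your closing remark that Koiso-style Bochner--Kodaira computation on $\wedge^{0,1}(TM)$ gives an alternative, spin-free derivation (as in \cite{YWang}) is also accurate and consistent with the paper's own comments.

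That said, as a proof your text stops short precisely where the content of the lemma lies. The coefficients $-2$ on $\Riem(h,\cdot)$ and the asymmetric pair $\Ric\circ h - h\circ i\rho$ are the whole point, and you defer their verification ("the computational heart\dots the main obstacle"). One concrete caveat about your bookkeeping: in the spin$^{c}$ setting the parallel spinor satisfies $R^{\mathcal{S}^{c}}(X,Y)\sigma_{0}=0$, i.e.\ $\tfrac14\sum_{k,l}R(X,Y,e_{k},e_{l})\,e_{k}\cdot e_{l}\cdot\sigma_{0}=-\tfrac{i}{2}F_{\mathcal{L}}(X,Y)\,\sigma_{0}$, so when you Clifford-commute inside the coefficient-bundle curvature term $\tfrac12\sum_{k,l}e_{k}\cdot e_{l}\cdot\otimes R^{TM^{\ast}}(e_{k},e_{l})$, the ``genuine two-form'' piece does not simply assemble to $-2\Phi(\Riem(h,\cdot))$; part of it converts into Ricci-form contributions via this identity, and it is the combination of that piece with the $[c(F_{\mathcal{L}}),e_{i}\cdot]$ commutator you describe that produces $\Ric\circ h - h\circ i\rho$ with the correct signs. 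So your attribution of $\Ric\circ h$ to $\mathcal{R}^{TM^{\ast}}$ alone and $-h\circ i\rho$ to the line-bundle commutator alone is not quite how the terms fall out, although the total is right. To turn the sketch into a proof you would need to carry out this contraction carefully under the paper's curvature convention, or simply cite \cite{DWW} as the paper does.
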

In order to deal with the Ricci curvature terms we use the following lemma which is implicit in the proof of Theorem 2.5 in \cite{DWW}.
\begin{lemma}
Let $h$ be a skew-hermitian section of $s^{2}(TM^{\ast})$. Then
$$(Ric\circ h-h\circ i\rho, h) =0.$$ 
\end{lemma}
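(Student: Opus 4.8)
The plan is to work in a local unitary frame and compute both Ricci-type operators explicitly on a skew-hermitian (equivalently, $J$-anti-invariant) symmetric $2$-tensor $h$, then observe that the two contributions cancel pointwise. Recall that for such $h$ the only nonzero components are $h_{\bar\alpha\bar\beta}$ and $h_{\alpha\beta}$, with $h_{\alpha\bar\beta}=0$; this is exactly the complex-coordinate description of a variation of complex structure given earlier via $\zeta_{\alpha}^{\beta}=0$. The two endomorphisms appearing in the statement are $\Ric\circ h$, which in an orthonormal frame sends $h$ to the tensor with components $R_{ik}h_{kj}$ (contracting the Ricci endomorphism into the first slot of $h$), and $h\circ i\rho$, which contracts the Ricci-form endomorphism $i\rho$ into the second slot.

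First I would pass to the unitary frame $\{X_\alpha\}$ and compute $i\rho$ as an endomorphism of $TM$. In K\"ahler geometry the Ricci form $\rho$ and the Ricci tensor $\Ric$ are related by $\rho(\cdot,\cdot)=\Ric(J\cdot,\cdot)$, so as an endomorphism $i\rho$ acts on the holomorphic tangent space with eigenvalue structure governed by $R_{\alpha\bar\beta}$, while $\Ric$ as an endomorphism is the symmetric operator with the same matrix $R_{\alpha\bar\beta}$ but without the factor of $J$. Concretely, on the $(1,0)$ part one finds $\Ric$ acts by $R_{\alpha\bar\gamma}g^{\gamma\bar\beta}$ and $i\rho$ acts by the same matrix up to a sign coming from the $J$-factor; the key point is that because $h$ is $J$-anti-invariant, composing $\Ric$ on the left (first index) and composing $i\rho$ on the right (second index) produce expressions that differ precisely by the sign that $J$-anti-invariance flips. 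Spelling this out: $(\Ric\circ h)_{\bar\alpha\bar\beta} = R_{\bar\alpha}{}^{\gamma}h_{\gamma\bar\beta}$ — but $h_{\gamma\bar\beta}=0$ — so one must instead track the mixed/Clifford bookkeeping carefully, which is why the statement is phrased in terms of $\Phi(h)$ and the honest computation happens after applying $\Phi$.

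Given that subtlety, the cleaner route — and the one I expect the paper to take — is to work directly with $\Psi(h)\in\wedge^{0,1}(TM)$, whose components are $h(\bar X_i,\bar X_j)$, and compute how $\Ric\circ h$ and $h\circ i\rho$ act on these components. Under $\Psi$, composing $h$ with the Ricci endomorphism on one side rescales the $\bar\theta^i$ index by the Ricci eigenvalues, and composing with $i\rho$ on the other side rescales the $X_j$ index by the (same, up to $i$ and $J$) eigenvalues. Writing everything out in the unitary frame, $(\Ric\circ h)(\bar X_i,\bar X_j)=R_{i}{}^{k}h(\bar X_k,\bar X_j)$ and $(h\circ i\rho)(\bar X_i,\bar X_j)=h(\bar X_i, (i\rho)(\bar X_j))$, and since $i\rho$ acting on $\bar X_j$ returns $-R_j{}^k \bar X_k$ (the sign being the content of $\rho=\Ric(J\cdot,\cdot)$ evaluated on anti-holomorphic vectors), the two contract against the same symmetric Ricci matrix with opposite signs. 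Hence $(\Ric\circ h - h\circ i\rho)(\bar X_i,\bar X_j) = R_i{}^k h(\bar X_k,\bar X_j) + R_j{}^k h(\bar X_i,\bar X_k)$ — wait, this is symmetric, not zero; so the correct statement of the cancellation is that when one takes the pointwise inner product with $h$ itself, the Hermitian symmetry of $R_{\alpha\bar\beta}$ together with the symmetry of $h$ forces $(R_i{}^k h_{kj} - (i\rho\text{-term}), h^{ij})$ to vanish, because the real part of $R_{\alpha\bar\beta}(\text{something antisymmetric in that pairing})$ is zero. So the final step is: expand $(\Ric\circ h - h\circ i\rho, h)$ in the unitary frame, use that $R_{\alpha\bar\beta}$ is Hermitian and $h_{\bar\alpha\bar\beta}$ is symmetric, and conclude the whole expression is purely imaginary, hence its real part — which is what $(\cdot,\cdot)$ computes on these complexified tensors — is zero.

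The main obstacle is getting the sign and the placement of $J$ exactly right in translating "$i\rho$ as an endomorphism composed into $h$" into frame components, since a single misplaced sign makes the expression symmetric rather than antisymmetric and the argument collapses. Once the bookkeeping is fixed — the cleanest check being to evaluate both terms on the K\"ahler-Einstein model $\Ric = \lambda g$, $\rho = \lambda\omega$, where $\Ric\circ h = \lambda h$ and $h\circ i\rho = \lambda h$ manifestly cancel — the general case follows by the same contraction with the Hermitian Ricci matrix replacing $\lambda$. I would therefore structure the proof as: (i) record the frame components of $h$, $\Ric$, and $i\rho$; (ii) compute $(\Ric\circ h - h\circ i\rho)$ componentwise; (iii) pair with $h$ and invoke Hermitian symmetry of $R_{\alpha\bar\beta}$ to kill the real part.
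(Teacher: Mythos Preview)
Your plan never actually closes the argument: the entire content of the lemma is the sign you leave unresolved. You compute $(\Ric\circ h - h\circ i\rho)(\bar X_i,\bar X_j)$ and obtain $R_i{}^k h(\bar X_k,\bar X_j) + R_j{}^k h(\bar X_i,\bar X_k)$, notice this is symmetric rather than antisymmetric, and then retreat to the claim that the pairing with $h$ is ``purely imaginary''. That fallback is not correct: each of the two summands, paired with $\overline{h_{\bar\imath\bar\jmath}}$, is manifestly real (it is a Hermitian form in the Ricci eigenvalues), and by the relabelling $i\leftrightarrow j$ together with symmetry of $h$ the two summands are \emph{equal}, not complex conjugates of opposite sign. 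So with the $+$ sign you wrote, the expression is twice a real quantity, not zero. The lemma is true only because the correct sign is a minus, and establishing that minus sign is exactly the bookkeeping you flag as ``the main obstacle'' and then do not carry out. The K\"ahler--Einstein sanity check you propose verifies the statement when $\Ric=\lambda g$ but gives no information about the relative sign in the general case, since both terms equal $\lambda h$ regardless.

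The paper sidesteps the unitary-frame sign chase entirely. It works in a \emph{real} orthonormal frame $\{e_1,\dots,e_{2m}\}$ adapted to $J$ (so $e_{m+i}=Je_i$) and chosen to diagonalise $\Ric$, with eigenvalues $c_i=c_{m+i}$. In that frame $(\Phi(\Ric\circ h),\Phi(h))$ and $(\Phi(h\circ i\rho),\Phi(h))$ become explicit quadratic sums in the real components $h_{ij}$, and the skew-Hermitian identities $h_{ij}=-h_{(i+m)(j+m)}$, $h_{i(j+m)}=h_{(i+m)j}$ are substituted directly to see the two sums coincide. This is less conceptual than your intended symmetry argument, but it replaces the delicate $J$-and-$i\rho$ sign tracking with a two-line index computation that cannot go wrong.
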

\begin{proof}
This is a pointwise calculation. Choose normal coordinates at $p\in M$,  $\{e_{1},...,e_{2m}\}$ where $e_{m+i}=Je_{i}$ for $1\leq i \leq m$. We can also choose this basis so that the Ricci tensor is diagonalised i.e. $Ric(e_{i},e_{j})=c_{i}\delta_{ij} $ where $c_{m+i}=c_{i}$.
We have
$$
Re(\Phi(\Ric \circ h),\Phi(h) ) = \sum_{i,j=1}^{2m}c_{i}h_{ij}^{2}, 
$$
\begin{align*}
-Re(\Phi(h \circ F),\Phi(h) ) = -2\sum_{j=1}^{m}\sum_{i=1}^{m}c_{j}& (h_{(i+m)j}h_{i(j+m)}\\
&-h_{ij}h_{(i+m)(j+m)}).
\end{align*}
If $h$ is skew-Hermitian then 
$$h_{ij}=-h_{(i+m)(j+m)} \text{ and } h_{i(j+m)}=h_{(i+m)j}.$$
Hence
$$-Re(\Phi(h \circ F),\Phi(h)) = -2\sum_{i=1}^{m}\sum_{j=1}^{2m}c_{j}(h_{ij}^{2})=-\sum_{i,j=1}^{2m}c_{i}h_{ij}^{2}, $$
and the result follows.
\end{proof}

The final lemma we need to prove the main result in this section is a technical lemma to deal with the extra term one obtains by using the rescaled volume form $e^{-f}dV_{g}$.

\begin{lemma}\label{l3}
Let $A \in \Omega^{1}(M)$ be a one-form and $B\in \bigotimes^{k}TM^{\ast}$
\begin{equation}\label{divprod1}
\div (A\otimes B) = \div (A)\otimes B+\nabla_{A^{\sharp}}B
\end{equation}
\begin{equation}\label{divprod2}
div(df\otimes h) = (\Delta f) h + \nabla_{\nabla f}h 
\end{equation}
\begin{equation}\label{Nhh}
-\langle \nabla_{\nabla f} h,h\rangle_{f} = \frac{1}{2}\int_{M} \Delta_{f}f\|h\|^{2}e^{-f}dV_{g}. 
\end{equation}
\end{lemma}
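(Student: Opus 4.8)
The plan is to prove \eqref{divprod1} first, obtain \eqref{divprod2} as a special case, and then deduce \eqref{Nhh} from \eqref{divprod2} together with a weighted integration by parts. For \eqref{divprod1} I would work in a local orthonormal frame $\{e_i\}$ and use that $\div(T)=\mathrm{tr}_{12}(\nabla T)$ with the conventions of Section~1. The Leibniz rule for the Levi-Civita connection gives $\nabla_i(A_j B_{k_1\cdots k_n}) = (\nabla_i A_j)B_{k_1\cdots k_n} + A_j\nabla_i B_{k_1\cdots k_n}$, and contracting the derivative index $i$ against the one-form index $j$ turns the first term into $(\div A)\otimes B$ and the second, being the contraction of the derivative slot against $A$, into $\nabla_{A^{\sharp}}B$. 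Then \eqref{divprod2} is immediate by applying \eqref{divprod1} with $A=df$ and $B=h$, using $\div(df)=\mathrm{tr}(\Hess f)=\Delta f$ and $(df)^{\sharp}=\nabla f$.

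For \eqref{Nhh}, the key pointwise identity is that metric-compatibility of $\nabla$ gives $(\nabla_{\nabla f}h, h) = \tfrac{1}{2}\nabla_{\nabla f}\|h\|^{2} = \tfrac{1}{2}(\nabla f, \nabla\|h\|^{2})$, so that
$$ -\langle \nabla_{\nabla f}h, h\rangle_{f} = -\tfrac{1}{2}\int_{M}(\nabla f, \nabla\|h\|^{2})\, e^{-f}\, dV_{g}. $$
I would then invoke the weighted divergence theorem: on the closed manifold $M$, $\int_M \div(u\, e^{-f}\nabla v)\, dV_g = 0$ for smooth functions $u,v$, and since $\div(e^{-f}\nabla v) = e^{-f}\Delta_f v$ with $\Delta_f v = \Delta v - (\nabla f, \nabla v)$, this yields the Bakry--\'Emery integration by parts $\int_M (\nabla u, \nabla v)\, e^{-f}dV_g = -\int_M u\, \Delta_f v\, e^{-f}dV_g$. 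Applying this with $u=\|h\|^{2}$ and $v=f$ converts the right-hand side above into $\tfrac{1}{2}\int_M \Delta_f f\, \|h\|^{2} e^{-f}dV_g$, which is \eqref{Nhh}.

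There is no serious obstacle here; this is essentially a bookkeeping lemma. The only points requiring care are making the index contraction in \eqref{divprod1} consistent with the trace-in-the-first-two-slots divergence convention fixed earlier, and, in \eqref{Nhh}, remembering that the Laplacian which is self-adjoint with respect to the weighted measure $e^{-f}dV_g$ is the drift Laplacian $\Delta_f$, not $\Delta$, so that the boundary-free integration by parts naturally produces $\Delta_f f$ rather than $\Delta f$.
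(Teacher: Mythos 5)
Your proofs of \eqref{divprod1} and \eqref{divprod2} are correct and coincide with the paper's (Leibniz rule in a normal orthonormal frame, then specialise $A=df$, $B=h$). For \eqref{Nhh} you take a genuinely different and in fact more elementary route: metric compatibility gives the pointwise reduction $(\nabla_{\nabla f}h,h)=\tfrac{1}{2}(\nabla f,\nabla\|h\|^{2})$, so the claim becomes an integration by parts for \emph{scalar} functions against the weighted measure, and the Bakry--\'Emery identity $\int_{M}(\nabla u,\nabla v)\,e^{-f}dV_{g}=-\int_{M}u\,\Delta_{f}v\,e^{-f}dV_{g}$ with $u=\|h\|^{2}$, $v=f$ finishes it; the signs check out and the argument is complete. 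The paper instead stays at the tensor level: it writes $\langle\nabla_{\nabla f}h,h\rangle_{f}=\langle\nabla h,\,df\otimes h\rangle_{f}=-\langle h,\div_{f}(df\otimes h)\rangle_{f}$, expands via \eqref{divprod2}, and obtains the result because $\langle\nabla_{\nabla f}h,h\rangle_{f}$ reappears on the right-hand side and can be moved across, which is where the factor $\tfrac{1}{2}$ comes from. Your scalar reduction buys simplicity (only the weighted divergence theorem for functions is needed, and the $\tfrac{1}{2}$ arises directly from differentiating $\|h\|^{2}$), at the cost that \eqref{divprod1} and \eqref{divprod2} become logically superfluous for \eqref{Nhh}, whereas in the paper they are exactly the tools driving the tensorial integration by parts; since the lemma asks for all three identities anyway, this is a stylistic rather than a substantive difference.
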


\begin{proof}
For (\ref{divprod1}) we calculate using a normal, orthonormal basis $\{e_{i}\},$
$$div(A\otimes B) = \nabla_{e_{i}}(A \otimes B)(e_{i},\cdot) = div(A)\otimes B+\nabla_{A^{\sharp}}B.$$
For (\ref{divprod2}) we use $A=df, B=h$ in (\ref{divprod1}). In order to prove (\ref{Nhh}) we note that
$$\langle\nabla_{\nabla f}h,h\rangle_{f} = \langle \iota_{\nabla f} \nabla h,h\rangle_{f} = \langle \nabla h, df\otimes h\rangle_{f}=-\langle h,\div_{f}(df\otimes h)\rangle_{f}. $$
Now using (\ref{divprod2}) we have
$$\langle \nabla_{\nabla f} h,h\rangle_{f} = \int_{M}|\nabla f|^{2}\|h\|^{2}e^{-f}dV_{g}-\langle h,\div (df \otimes h)\rangle_{f}.$$
$$ = -\int_{M}(\Delta_{f} f)\|h\|^{2}e^{-f}dV_{g}-\langle \nabla_{\nabla f} h,h\rangle_{f}$$
and the result follows.

\end{proof}

As noted in \cite{HM}, the soliton potential function  of a normalised gradient Ricci soliton solves the equation 
$$\Delta_{f}f = -2f.$$

\begin{proof}[Proof of Main Theorem:] Lemmas \ref{l1} and \ref{l2} yield that pointwise
\begin{align*}
(\frac{1}{2}\Delta h+\Riem (h, \cdot), h) &= Re( \Phi(\frac{1}{2}\Delta h+\Riem (h, \cdot)), \Phi(h))\\
&= Re( D^*D(\Phi(h), \Phi(h))) \\
& = Re( -2\Delta_{\overline{\partial}_c}(\Psi(h), \Psi(h))\\
&= -2 Re ( \Phi(\Delta_{\overline{\partial}}h), \Phi(h))\\
&= -2( \Delta_{\bar{\partial}}h,h ). 
\end{align*}
However as $h$ is $f$-essential then it is orthogonal to the image of $\Delta_{\bar{\partial}}$ with respect to the global inner product.
Hence
$$\int_{M} ( \frac{1}{2}\Delta h +\Riem (h,\cdot),h ) e^{-f}dV_{g} = 0$$
and the result follows.  \end{proof}

\section{Examples and applications}

\subsection{Setup}
As mentioned in the introduction, there are three main sources for concrete examples of K\"ahler-Ricci solitons: the Dancer-Wang, Podesta-Spiro and the Wang-Zhu examples. The Wang-Zhu solitons exist on toric-K\"ahler manifolds and are non-trivial precisely when the Futaki invariant is non-zero.  Unfortunately, this class of manifold does not admit any non-trivial deformations of complex structure.  This follows from
\begin{theorem}[Bien-Brion,\cite{BB} Theorem 3.2]
Every Fano toric-K\"ahler manifold $M$ has $H^{1}(M,TM)=0$. 
\end{theorem}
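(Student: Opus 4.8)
The statement is purely algebro-geometric, so I would work with $M$ regarded as a smooth projective toric variety, written $X$, with fan $\Sigma$, ray set $\Sigma(1)$, and torus-invariant prime divisors $\{D_{\rho}\}_{\rho\in\Sigma(1)}$. The plan is to reduce the vanishing of $H^{1}(X,TX)$ to a vanishing statement for line bundles by means of the generalized Euler sequence. Dualizing the toric cotangent (Euler) sequence and using that $\mathrm{Pic}(X)$ is free, say of rank $r$, one obtains a short exact sequence of vector bundles
$$0\longrightarrow\mathcal{O}_{X}^{\oplus r}\longrightarrow\bigoplus_{\rho\in\Sigma(1)}\mathcal{O}_{X}(D_{\rho})\longrightarrow TX\longrightarrow0.$$
Taking the long exact sequence in cohomology and using that $H^{q}(X,\mathcal{O}_{X})=0$ for every $q>0$ (valid on any complete toric variety), the degree-$1$ and degree-$2$ contributions of the subbundle $\mathcal{O}_{X}^{\oplus r}$ vanish, and the sequence collapses to an isomorphism
$$H^{1}(X,TX)\;\cong\;\bigoplus_{\rho\in\Sigma(1)}H^{1}\!\left(X,\mathcal{O}_{X}(D_{\rho})\right).$$
It therefore suffices to prove $H^{1}(X,\mathcal{O}_{X}(D_{\rho}))=0$ for every ray $\rho$.

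The Fano hypothesis is indispensable at this point, because the individual boundary divisors $D_{\rho}$ need not be nef --- already on $\mathbb{F}_{1}$ (the one-point blow-up of $\mathbb{P}^{2}$) one of the $D_{\rho}$ is the exceptional $(-1)$-curve --- so Demazure's or Kodaira's vanishing theorem cannot be applied off the shelf. Instead I would use the combinatorial description of the cohomology of a torus-invariant line bundle: for $D=\sum_{\rho}a_{\rho}D_{\rho}$, the group $H^{p}(X,\mathcal{O}_{X}(D))$ is graded by the character lattice $\mathrm{Hom}(N,\ZZ)$, and its degree-$m$ component is the reduced cohomology $\widetilde{H}^{p-1}$ of an explicit polyhedral region $Z_{D,m}\subseteq N_{\RR}$ determined by the sign pattern of $\rho\mapsto\langle m,u_{\rho}\rangle+a_{\rho}$. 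For $D=D_{\rho}$ one analyses these regions directly; the Fano condition --- which says precisely that the anticanonical polytope $P_{-K_{X}}=\{m:\langle m,u_{\rho}\rangle\ge-1\ \text{for all }\rho\in\Sigma(1)\}$ is full-dimensional with normal fan $\Sigma$ --- is exactly what forces each $Z_{D_{\rho},m}$ to be empty or connected, so that $\widetilde{H}^{0}(Z_{D_{\rho},m})=0$ and hence $H^{1}(X,\mathcal{O}_{X}(D_{\rho}))=0$. As a cross-check I might run in parallel the adjunction sequence $0\to\mathcal{O}_{X}\to\mathcal{O}_{X}(D_{\rho})\to N_{D_{\rho}/X}\to0$, which (using $H^{>0}(X,\mathcal{O}_{X})=0$ once more) identifies $H^{1}(X,\mathcal{O}_{X}(D_{\rho}))$ with $H^{1}(D_{\rho},N_{D_{\rho}/X})$ on the $(n-1)$-dimensional smooth complete toric variety $D_{\rho}$, followed by an induction on dimension with the toric curve $\mathbb{P}^{1}$ as base case.

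The genuine difficulty is concentrated entirely in this last step. Since the $D_{\rho}$ are effective but in general not nef, no standard vanishing theorem applies directly, and the crux is to extract from the single geometric input ``$-K_{X}$ ample'' the purely combinatorial conclusion that none of the regions $Z_{D_{\rho},m}$ can be disconnected --- or, in the inductive formulation, that the normal bundles $N_{D_{\rho}/X}$ retain at each stage just enough positivity to kill $H^{1}$. Everything else --- the Euler sequence, the vanishing $H^{>0}(X,\mathcal{O}_{X})=0$, and the long-exact-sequence bookkeeping --- is routine, and once the groups $H^{1}(X,\mathcal{O}_{X}(D_{\rho}))$ are known to vanish the isomorphism of the first step gives $H^{1}(M,TM)=0$ at once.
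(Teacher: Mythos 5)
A preliminary remark on the comparison: the paper contains no proof of this statement at all --- it is imported verbatim as Theorem 3.2 of Bien--Brion \cite{BB}, whose argument runs through the general theory of regular (log homogeneous) compactifications and the logarithmic tangent bundle, not through the Euler sequence --- so your proposal has to be judged on its own merits rather than against an in-paper argument. Your reduction step is correct and standard: for a smooth projective toric variety the generalized Euler sequence $0\to\mathcal{O}_X^{\oplus r}\to\bigoplus_{\rho}\mathcal{O}_X(D_\rho)\to TX\to 0$ together with $H^{q}(X,\mathcal{O}_X)=0$ for $q>0$ does give $H^1(X,TX)\cong\bigoplus_{\rho}H^1(X,\mathcal{O}_X(D_\rho))$, and you correctly locate where the Fano hypothesis must enter: on the non-Fano toric surface $\mathbb{F}_2$ the invariant $(-2)$-curve $C$ has $H^1(\mathcal{O}(C))\neq 0$, matching $H^1(T\mathbb{F}_2)\neq 0$, so no argument using only completeness can work.

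The genuine gap is that the crux --- $H^1(X,\mathcal{O}_X(D_\rho))=0$ for every ray $\rho$ on a smooth toric Fano --- is asserted rather than proved. The sentence claiming that ampleness of $-K_X$ ``is exactly what forces each $Z_{D_\rho,m}$ to be empty or connected'' is the theorem itself in combinatorial disguise, and your closing paragraph concedes that this step is not carried out; no mechanism is given for converting the reflexivity/ampleness of the anticanonical polytope into connectedness of the negative-support regions for the individual, generally non-nef, divisors $D_\rho$. Your fallback does not close the gap either: the adjunction sequence does identify $H^1(X,\mathcal{O}_X(D_\rho))$ with $H^1(D_\rho,N_{D_\rho/X})$, but the proposed induction on dimension has no inherited hypothesis, because an invariant divisor of a smooth toric Fano need not be Fano --- for instance $\mathbb{P}_{\mathbb{P}^2}(\mathcal{O}\oplus\mathcal{O}(2))$ is a smooth toric Fano threefold whose invariant divisors over the coordinate lines are copies of $\mathbb{F}_2$ --- so the required vanishing on $D_\rho$ must come from a quantitative positivity statement about the normal bundles, which is exactly the point left unresolved. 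In short, the strategy is reasonable and close in spirit to known Euler-sequence proofs of rigidity of smooth toric Fano manifolds, but as written the central combinatorial vanishing is missing, so the proposal is an outline rather than a proof.
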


Similarly, one can see the Podesta-Spiro examples are rigid. The next class of examples to investigate are provided by the Dancer-Wang solitons.  These solitons are generalisations of the soliton on $\CC\mathbb{P}^{2}\sharp\overline{\CC\mathbb{P}}^{2}$ constructed by Koiso \cite{Koi2} and Cao \cite{Cao}. We begin by reviewing their construction.\\
\\
Let $(V_{i},r_{i},J_{i})$, $1 \geq i \leq r$ be Fano K\"ahler-Einstein manifolds with first Chern class $c_{1}(V_{i},J_{i}) = p_{i}a_{i}$, where $p_{i}$ are positive integers and $a_{i} \in H^{2}(V_{i};\mathbb{Z})$ are indivisible classes. The K\"ahler-Einstein metrics $r_{i}$  are normalised so that $\Ric (r_{i}) = p_{i}r_{i}.$
For $q=(q_{1},...,q_{r})$ with $q_{i} \in \mathbb{Z}-\{0\}$, let $P_{q}$ the total space of the principal $U(1)$-bundle over $B:=V_{1}\times V_{2}\times ...\times V_{r}$ with Euler class $\sum_{1}^{r}q_{i}\pi_{i}^{\ast}a_{i}$ where $\pi_{i}: V_{1} \times ... \times V_{r} \rightarrow V_{i}$ is the projection onto the ith factor. Denote by $M_{0}$ the product $I \times P_{q}$ for the unit interval $I$. We denote by $\theta$ the principal $U(1)$ connection on $P_{q}$ with curvature 
$$\Omega :=\sum_{1}^{r}q_{i}\pi^{\ast}\eta_{i}$$
where $\eta_{i}$  is the Kahler form of $r_{i}$.  There is a one-parameter family of metrics on $P_{q}$ given by
$$g_{t} := f^{2}(t)\theta \otimes \theta +\sum_{i=1}^{r}l_{i}^{2}(t)\pi_{i}^{\ast}r_{i}$$
where $f$ and $l_{i}$ are smooth functions on $I$ with prescribed boundary behaviour. Finally,  consider the metric on $M_{0}$ given by
$$g = dt^{2}+g_{t},$$ 
with the correct boundary behaviour of $f$ and the $l_{i}$. This metric then extends to a metic on a compactification of $M_{0}$, which we denote $M$.

The complex structure on this manifold can be described explicitly by lifting the complex structure on the base and requiring that $J(N) = -f(t)^{-1}Z$ where $N=\partial_{t}$ is normal to the hypersurfaces and $Z$ is the Killing vector that generates the isometric $U(1)$ action on $P_{q}$.

\subsection{Deformations of Dancer-Wang solitons}

The Ricci soliton equations in this setting reduce to a system of ODEs. We have the following existence theorem:
\begin{theorem}[Dancer-Wang, \cite{DanWan} Theorem 4.30]
Let $M$ denote the compactification of $M_{0}$ as above. Then $M$ admits a K\"ahler-Ricci soliton $(M, g, u)$, which is Einstein if and only if the associated Futaki invariant vanishes.
\end{theorem}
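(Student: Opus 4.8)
The plan is to reduce the K\"ahler-Ricci soliton equation for a metric of the above cohomogeneity form to a single, explicitly solvable ordinary differential equation, and then to analyse the boundary conditions needed for the solution to close up smoothly on the compactification $M$. First I would use the K\"ahler structure to cut down the unknowns. The fibre vector field $Z$ generating the $U(1)$-action is holomorphic and Killing, so its Hamiltonian $x$ (the moment map of the action) is a natural coordinate: up to rescaling $t$ one has $dx=f(t)\,dt$ and $x$ runs over a closed interval $[x_-,x_+]$. Writing $\phi(x):=f(t)^2$ for the momentum profile, the K\"ahler condition forces each $l_i^2$ to be an affine function of $x$, with slope and intercept fixed by $q_i$, $p_i$ and the normalisation $\Ric(r_i)=p_ir_i$; and since $\nabla u$ must be holomorphic and generate the same $U(1)$, the potential is also affine, $u(x)=\alpha x+\beta$. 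Thus the metric becomes $g=\phi(x)^{-1}dx^2+\phi(x)\,\theta\otimes\theta+\sum_i l_i(x)^2\pi_i^*r_i$, and all the data are determined by the single function $\phi$ together with the one real parameter $\alpha$.

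Next I would compute the Ricci tensor of $g$ using the submersion/warped-product curvature formulae and impose $\Ric+\Hess u=g$ (the normalisation $c=1$). The components of $\Ric+\Hess u-g$ reduce to expressions in $\phi,\phi',\phi''$, the affine $l_i^2$, and $\alpha$; substituting the K\"ahler relations and integrating once, the whole system collapses to a single first-order linear ODE for $\phi$ with integrating factor $Q(x)e^{-\alpha x}$, where $Q(x):=\prod_i l_i(x)^{2\dim_{\CC}V_i}$ is the fibrewise volume profile. Hence $\phi$ is given by an explicit formula of the shape
$$\phi(x)=\frac{1}{Q(x)e^{-\alpha x}}\int_{x_-}^{x}Q(y)\,e^{-\alpha y}\,P_\alpha(y)\,dy,$$
where $P_\alpha$ is an explicit affine function of $y$ whose coefficients depend on $p_i,q_i,\alpha$, and the lower limit encodes the choice of integration constant enforcing $\phi(x_-)=0$.

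For $M_0$ to compactify smoothly one needs the standard $S^1$-collapsing conditions at the singular orbits, $\phi(x_\pm)=0$ and $\phi'(x_\pm)=\mp 2$ (with a suitable normalisation of the circle), while $l_i(x_\pm)^2>0$ is arranged by the construction of the interval. The conditions at $x_-$ are built into the formula (the behaviour of $P_\alpha$ near $x_-$ gives $\phi'(x_-)=2$ after normalisation), so the only genuine constraint is $\phi(x_+)=0$, i.e.
$$F(\alpha):=\int_{x_-}^{x_+}Q(y)\,e^{-\alpha y}\,P_\alpha(y)\,dy=0,$$
with $\phi'(x_+)=-2$ then automatic from the integrated equation. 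I would show $F$ is continuous (indeed analytic) in $\alpha$, analyse its behaviour as $\alpha\to\pm\infty$ (where the exponential weight concentrates the integral at one endpoint, forcing a definite sign), and conclude by the intermediate value theorem that $F(\alpha_0)=0$ for some $\alpha_0$; a monotonicity or convexity argument gives uniqueness. One must also check that the corresponding $\phi$ is strictly positive on $(x_-,x_+)$: since $P_{\alpha_0}$ is affine it changes sign at most once, and $F(\alpha_0)=0$ forces that sign change to be from $+$ to $-$, so the partial integrals from $x_-$ stay positive.

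Finally, the soliton is K\"ahler-Einstein precisely when $u$ is constant, i.e. $\alpha_0=0$, which by uniqueness of the root happens iff $F(0)=0$. But $F(0)=\int_{x_-}^{x_+}Q(y)P_0(y)\,dy$ is, up to a positive constant, exactly the one-dimensional reduction of the classical Futaki invariant of the Fano manifold $M$ paired with the holomorphic field $Z$; since the reductive part of $\mathfrak{h}(M)$ is generated by $Z$ together with horizontal holomorphic fields on which the Futaki character automatically vanishes, $F(0)=0$ is equivalent to the vanishing of the Futaki invariant of $M$, giving the stated equivalence. \textbf{Main obstacle.} The analytic core is the third step: showing that the single explicit formula for $\phi$ can be made to meet both endpoint conditions simultaneously while remaining positive on the interior. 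This requires a careful sign analysis of the affine factor $P_\alpha$ and of the weighted integral $F(\alpha)$, together with the bookkeeping that singles out the correct interval $[x_-,x_+]$ and boundary data so that the resulting $M$ is smooth.
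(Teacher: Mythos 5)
This theorem is quoted in the paper from Dancer--Wang \cite{DanWan}; the paper itself supplies no proof, so the only meaningful comparison is with the original argument. Your sketch follows essentially the same route as Dancer--Wang (and, before them, Koiso and Cao in the $\CC\mathbb{P}^2\sharp\overline{\CC\mathbb{P}}^2$ case): pass to the moment-map coordinate $x$ of the circle action, note that the K\"ahler condition makes the $l_i^2$ affine in $x$ and that $J\nabla u$ Killing forces $u=\alpha x+\beta$, reduce the soliton equation to a first-order linear ODE for the profile $\phi$ with integrating factor $Q(x)e^{-\alpha x}$, impose the smooth-collapse boundary conditions, solve the resulting scalar condition $F(\alpha)=0$ by a sign/intermediate-value analysis, check positivity of $\phi$ from the single sign change of the affine source term, and identify $F(0)$ with the Futaki invariant evaluated on the generator $Z$. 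So in outline your proposal is the known proof, not an alternative to it.

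Two places where the sketch is thinner than it should be. First, the assertion that the componentwise system $\Ric+\Hess u=g$ ``collapses'' to one ODE is precisely where the computation lives: a priori each factor $V_i$ contributes its own equation, and their mutual consistency with a single $\phi$ is not automatic from the ansatz; the clean justification is that the soliton equation is an identity of invariant exact $(1,1)$-forms, $\rho-\omega=-\tfrac12 dd^{c}u$, so it is equivalent to one scalar equation in $x$ once the cohomological normalisation $[\omega]=c_1(M)$ (equivalently the constraints $|q_i|<p_i$ fixing the interval and the affine $l_i^2$) is imposed -- this is also exactly where the Fano hypothesis enters, and it is needed again for your claim that $\phi'(x_-)=2$ and $\phi'(x_+)=-2$ come out ``automatically'' once $\phi(x_\pm)=0$. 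Second, the final step is overcomplicated: you do not need uniqueness of the root $\alpha_0$, nor the (unjustified) claim that the reductive automorphism algebra is generated by $Z$ and horizontal fields. The direction ``Einstein $\Rightarrow$ Futaki vanishes'' is Futaki's classical theorem, while ``Futaki vanishes $\Rightarrow$ Einstein'' follows because $\mathrm{Fut}(Z)=0$ gives $F(0)=0$, so $\alpha=0$ already yields the K\"ahler--Einstein solution; and if the Futaki invariant is nonzero no K\"ahler--Einstein metric exists at all, so the constructed soliton is necessarily non-Einstein.
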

We refer to \cite{DanWan} for details of the constructions. If one chooses the components $V_{i}$ to be homogenous, K\"ahler-Einstein manifolds then the resulting $M$ is toric. However, by choosing the components $V_{i}$ to be non-homogenous, Fano, K\"ahler-Einstein and calculating the Futaki invariant,  they give examples of non-toric K\"ahler-Ricci solitons.  It is these that may admit complex deformations.   

Suppose that $V_{i}$ is a Fano, K\"ahler-Einstein manifold admitting deformations of its complex structure $J_{i}$.  We consider   variations $h_{i,t}$ in the Kahler metric $r_{i,t}$ such that the K\"ahler form $\eta_{i,t} = r_{i,t}(J_{i,t}\cdot,\cdot)$ remains in the class $c_{1}(V_{i},J_{0})$. This induces a variation in the metric on the whole space given by
$$ h_i = l_{i}^{2}(t)\pi^{\ast}h_{i,t}.$$

Clearly the same procedure works for any product of K\"ahler-Einstein manifolds with some (or all) of the factors admitting complex deformations. Here it is simply stated for one factor for simplicity.  Let us state our final result:
\begin{theorem}
For this perturbation $h$, one has $N(h)= 0$. 
\end{theorem}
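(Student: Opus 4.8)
The plan is to reduce the statement to the Main Theorem (Theorem~\ref{Maint}) by showing that the perturbation $h = l_i^2(t)\pi^*h_{i,t}$ on the total space $M$ corresponds, under the identification of variations of complex structure with symmetric $2$-tensors, to an $f$-essential variation $\zeta$ which is moreover harmless because $f$ vanishes on the relevant distribution. Actually, the cleaner route is: apply the Main Theorem to conclude $\langle N(h),h\rangle_f = 2\int_M f\|h\|^2 e^{-f}dV_g$, and then show that the right-hand side vanishes because the soliton potential $u$ (written $f$ in the general theory) of the Dancer--Wang soliton \emph{depends only on the interval variable} $t$, while $h$ is supported on the base directions $\pi^*h_{i,t}$ in a way that forces the integral to vanish. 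So the first step is to verify that $h$ is genuinely an $f$-essential variation of complex structure on $M$, so that Theorem~\ref{Maint} applies: I would check that $h$ is $J$-anti-invariant (since each $h_{i,t}$ is $J_{i,t}$-anti-invariant on $V_i$ and the complex structure on $M$ restricts to the lifted $J_i$ on the base directions), that $h$ is divergence-free in the twisted sense (this should follow from $\div_f h_{i,t}=0$ on each factor together with the warped-product structure of $g$), and that $h$ is orthogonal to all Lie derivatives $L_ZJ$ with respect to the twisted inner product on $M$.

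The second step, which I expect to be the technical heart, is to compute $\int_M u\,\|h\|^2 e^{-u}\,dV_g$ and show it is zero. Here I would use the explicit warped-product form $g = dt^2 + f^2(t)\theta\otimes\theta + \sum_i l_i^2(t)\pi_i^*r_i$, under which $\|h\|_g^2 = l_i^{-4}(t)\,\|\pi^*h_{i,t}\|^2_{l_i^2 r_i} \cdot (\text{pointwise factors}) = l_i^{-2}(t)\|h_{i,t}\|^2_{r_i}$ roughly, and the volume form factors as $dV_g = f(t)\prod_j l_j^{2n_j}(t)\,dt\wedge(U(1)\text{-fiber})\wedge\prod_j dV_{r_j}$. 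Since $u=u(t)$ and $h_{i,t}, r_{i,t}$ live on the $V_i$ factor, Fubini separates the integral into $\Big(\int_0^1 u(t)\,[\text{function of }t]\,e^{-u(t)}\,dt\Big)\cdot\Big(\int_{V_i}\|h_{i,t}\|^2\,dV_{r_i}\Big)\cdot(\text{const})$. The claim then reduces to showing the $t$-integral vanishes. But wait — this need not vanish for an arbitrary $t$-profile; the point must be that the \emph{variation} is through Kähler metrics in a \emph{fixed} Kähler class, so one is free to normalize, and in fact I expect the right statement is that one uses the normalization $\int_M u\,e^{-u}dV_g = 0$ together with the fact that, along the deformation, the Kähler class is fixed so $\int_{V_i}\|h_{i,t}\|^2$ is essentially constant in $t$ (up to the $l_i(t)$ scaling which is absorbed), making the weight $\|h\|^2$ effectively a constant multiple of the weight appearing in the normalization condition.

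**Alternative and likely cleaner second step:** observe that the essential variation $\zeta_i$ on $V_i$ satisfies $\langle N_{V_i}(\zeta_i),\zeta_i\rangle = 0$ because $V_i$ is Kähler--\emph{Einstein} (so $f_{V_i}=0$), i.e. Theorem~\ref{TZt} applies on each factor with equality forced by the essential/harmonic condition via the Main Theorem's formula with $f=0$. Then I would argue that the operator $N$ on $M$, restricted to perturbations of the warped-product type $h = l_i^2(t)\pi^*h_{i,t}$, \emph{decomposes} according to the fibration: the base contribution gives exactly $l_i^2(t)\pi^*(N_{V_i}h_{i,t}) = 0$, and the cross/fiber terms vanish because $h$ has no $dt$ or $\theta$ components and the connection $\theta$ is built from the base Kähler forms $\eta_i$ which are \emph{fixed} (the deformation keeps $\eta_{i,t}$ in the fixed class $c_1(V_i,J_0)$, so $\Omega = \sum q_i\pi^*\eta_i$ and hence the whole fibration data is undisturbed to first order). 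This bypasses the delicate $t$-integral entirely.

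**The main obstacle** I anticipate is Step~1 — rigorously checking that $h$ is $f$-essential on the total space $M$, not merely that each $h_{i,t}$ is essential on $V_i$; the twisted inner product on $M$ uses the \emph{global} potential $u(t)$, and one must verify that integrating out the $t$ and fiber directions doesn't destroy the orthogonality to $L_ZJ$ for vector fields $Z$ that mix base and fiber directions. I would handle this by decomposing an arbitrary $Z$ on $M$ into its $\partial_t$-component, its fiber ($Z$-direction) component, and its horizontal lift, computing $L_ZJ$ for each piece using the explicit description $J(\partial_t) = -f(t)^{-1}\partial_\theta$ and $J|_{\text{base}} = \pi^*J_i$, and showing that $\langle h, L_ZJ\rangle$ integrates to zero over $M$ — the horizontal-lift piece reducing to the essential condition on $V_i$, and the other pieces vanishing because $h$ is purely horizontal and $J$-anti-invariant on the base while $u$ depends only on $t$.
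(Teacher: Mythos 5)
Your first route is essentially the paper's own proof: the $l_i^{2}(t)$ factor in $h$ cancels exactly against the $l_i^{2}(t)$ scaling of $g$ on the base directions (so $\|h\|_g^2$ is genuinely constant, not merely constant ``up to an absorbed $l_i$ factor'' --- your intermediate $l_i^{-2}(t)$ should drop out), and then the weighted integral vanishes; you invoke the normalisation $\int_M u\,e^{-u}dV_g=0$, while the paper equivalently writes $2u=-\Delta_u u$ and uses $\int_M \Delta_u u\, e^{-u}dV_g=0$, which is the same thing. The paper, like you, applies the main theorem's formula without separately verifying the $f$-essential condition for this $h$, so your Step 1 is no more (and no less) detailed than the published argument, and your alternative ``fibration decomposition'' step is not needed.
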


\begin{proof}
It follows from the construction of $h$ that 
$$
\|h_i\|_g^{2}=\int_I\|h_{i,t}\|^{2}_{r_{i,t}}dt = \|h_{i,0}\|^2_{r_{i,0}}\int_Idt = \|h_{i,0}\|^2_{r_{i,0}}.
$$ 
In other words $\|h_{i,t}\|_{r_{i,t}}$ is independent of $t$. We now see that
\begin{align*}
\langle Nh, h \rangle  =& \int_{M}\Delta_uu\|h\|_g^{2} e^{-u}dV_{g} \\
 =& \|h\|_g^2\int_M (\Delta_u u e^{-u})  dV_g\\
 =& 0.
 \end{align*}
\end{proof}

\textbf{Remark} The significance of this result is that it verifies Tian-Zhu's conjecture for every obvious example of a complex deformation of  the known K\"ahler-Ricci solitons. We do not know of any explicit deformations beyond these.

It is notable that all $f$-essential perturbations $h$ known to us one has\newline  $N(h) = 0$: understanding if this is always the case would involve calculating \newline $H^1(M,TM)$, which is not easy to calculate in general.

\end{document}